\documentclass[3p, sort&compress]{elsarticle}

\usepackage{lineno}
\usepackage[hyperfootnotes=false,colorlinks]{hyperref}

\journal{CMAME}

%%%%%%%%%%%%%%%%%%%%%%%
%% Elsevier bibliography styles
%%%%%%%%%%%%%%%%%%%%%%%
%% To change the style, put a % in front of the second line of the current style and
%% remove the % from the second line of the style you would like to use.
%%%%%%%%%%%%%%%%%%%%%%%

%% Numbered
%\bibliographystyle{model1-num-names}

%% Numbered without titles
%\bibliographystyle{model1a-num-names}

%% Harvard
%\bibliographystyle{model2-names.bst}\biboptions{authoryear}

%% Vancouver numbered
%\usepackage{numcompress}\bibliographystyle{model3-num-names}

%% Vancouver name/year
%\usepackage{numcompress}\bibliographystyle{model4-names}\biboptions{authoryear}

%% APA style
%\bibliographystyle{model5-names}\biboptions{authoryear}

%% AMA style
%\usepackage{numcompress}\bibliographystyle{model6-num-names}

%% `Elsevier LaTeX' style
\bibliographystyle{elsarticle-num}
%%%%%%%%%%%%%%%%%%%%%%%

%%%%%%%%%%%%%%%%%%%%%%%%%%%%%%%%%%%%%%%%%%%%%%%%%%%%%%%%%%%%%%%%%%%%%%%%%
%
% MY PACKAGES
%

\usepackage{amsmath, amsthm, amssymb, amsfonts, verbatim, xcolor, graphicx, comment, tikz, pgfplots, mathtools, mathrsfs, ifthen, import, float, subcaption, enumitem, multirow,booktabs, bm}
\usepackage[symbol]{footmisc}
\usepackage[ruled,vlined,linesnumbered]{algorithm2e}%for psuedo code
\SetKwComment{Comment}{/* }{ */}
\SetKwInput{KwOutput}{Output}

\usepackage{adjustbox}
\usepackage{empheq}
\usepackage{threeparttable}
\usepackage{booktabs, caption, makecell}

\captionsetup[subfigure]{
			font=footnotesize,
              labelfont=up,
              justification=centering
             }

\pgfplotsset{compat=newest}

\usetikzlibrary{patterns}

\newtheorem{proposition}{Proposition}

\newtheorem{example}{Example}

\newcommand{\R}{\mathbb{R}}

\newcommand{\U}{\mathbb{U}}
\newcommand{\V}{\mathbb{V}}
\newcommand{\M}{\mathbb{M}}
\newcommand{\n}{\text{NN}}

\newcommand{\J}{\mathcal{J}}

\renewcommand{\L}{\mathcal{L}}
\newcommand{\F}{\mathcal{F}}

\newcommand{\D}{\mathcal{D}}

\newcommand{\review}[1]{{\color{black}{#1}}}
\usepackage[colorinlistoftodos,prependcaption, color=yellow!90!black, backgroundcolor=yellow!40!white,textcolor=black, textwidth=1.8cm,textsize=footnotesize]{todonotes}

\makeatletter
\makeatother

%%%%%%%%%%%%%%%%%%%%%%%%%%%%%%%%%%%%%%%%%%%%%%%%%%%%%%%%%%%%%%%%%%%%%%%%%

\begin{document}

\begin{frontmatter}

\title{A Deep Double Ritz Method (D$^2$RM) for solving Partial Differential Equations using Neural Networks}

%% Group authors per affiliation:
\author[1,2]{Carlos Uriarte}
\ead{carlos.uriarte@ehu.eus, curiarte@bcamath.org,  carlos.uribar@gmail.com}

%% or include affiliations in footnotes:
\author[2,1,3]{David Pardo}
\ead{david.pardo@ehu.eus, dzubiaur@gmail.com}

\author[4,1]{Ignacio Muga}
\ead{ignacio.muga@pucv.cl, ignacio.muga@gmail.com}

\author[1,5]{Judit Muñoz-Matute}
\ead{jmunoz@bcamath.org, judit.munozmatute@utexas.edu,  judith.munozmatute@gmail.com}

\address[1]{Basque Center for Applied Mathematics (BCAM), Alameda Mazarredo 14, E48009 Bilbao, Spain}
\address[2]{Universidad del País Vasco/Euskal Herriko Unibertsitatea (UPV/EHU), Barrio Sarriena, E48940 Leioa, Spain}
\address[3]{Basque Foundation for Science (Ikerbasque), Plaza Euskadi 5, E48009 Bilbao, Spain}
\address[4]{Pontificia Universidad Católica de Valparaíso (PUCV), Avenida Brasil 2950, Valparaíso, Chile}
\address[5]{Oden Institute for Computational Engineering and Sciences (OICES), 201 E 24th St, 78712, Austin, Texas, USA}
 
\begin{abstract}
Residual minimization is a widely used technique for solving Partial Differential Equations in variational form. It minimizes the dual norm of the residual, which naturally yields a saddle-point (min-max) problem over the so-called trial and test spaces.  In the context of neural networks,  we can address this min-max approach by employing one network to seek the trial minimum,  while another network seeks the test maximizers.  However,  the resulting method is numerically unstable as we approach the trial solution. To overcome this, we reformulate the residual minimization as an equivalent minimization of a Ritz functional fed by optimal test functions computed from another Ritz functional minimization. We call the resulting scheme the Deep Double Ritz Method (D$^2$RM), which combines two neural networks for approximating trial functions and optimal test functions along a nested double Ritz minimization strategy.  Numerical results on different diffusion and convection problems support the robustness of our method, up to the approximation properties of the networks and the training capacity of the optimizers.
\end{abstract}

\begin{keyword}
Partial Differential Equations \sep Variational Formulation \sep Residual Minimization \sep Ritz Method \sep Optimal Test Functions  \sep Neural Networks
\end{keyword}

\end{frontmatter}

\section{Introduction}

In the last decade,  Neural Networks (NNs) have emerged as a powerful alternative for solving Partial Differential Equations (PDEs).  For example,  \cite{zhang2020meshingnet,pfaff2020learning,paszynski2021deep,sluzalec2022quasi} employ NNs to generate optimal meshes for later solving PDEs by a Finite Element Method (FEM), \cite{uriarte2022finite} proposes a Deep-FEM method that mimics mesh-refinements within the NN architecture,  \cite{omella2022r} employs a NN to generate the mesh via $r$-adaptivity,  and \cite{brevis2021machine, brevis2022neural} use NNs to improve discrete weak formulations.  Alternatively,  there exist approaches to directly represent the PDE solutions via NNs.  \review{To mention a few: \cite{raissi2019physics,qin2022rar} minimize the strong form of the residual via collocation methods;} \cite{yu2018deep} proposes a Deep Ritz Method (DRM) for symmetric and positive definite problems; \cite{taylor2022deep} introduces a Deep Fourier Method; and \cite{sirignano2018dgm, kharazmi2019variational, khodayi2020varnet,kharazmi2021hp, shang2022deep} propose (Petrov-)Galerkin frameworks in the context of trial NNs with test functions belonging to linear spaces.  \review{All these NN-based methods exhibit multiple features but also present several limitations (see, e.g.,  \cite{shin2020convergence, krishnapriyan2021characterizing,wang2022and,daw2022rethinking}). For example,  ($hp$-)VPINNs \cite{kharazmi2019variational, kharazmi2021hp} require a set of test functions which may be difficult to construct in order to guarantee optimal stability properties.}

In the context of \review{variational} residual minimization methods \cite{bramble1997least,  bochev2009least, calo2021isogeometric,calo2020adaptive,cier2021automatically},  Weak Adversarial Networks (WANs) \cite{zang2020weak, bao2020numerical} approximate both the trial and test functions via NNs.  Since the dual norm is defined as a supremum over the test space, the residual minimization reads as a min-max problem over the trial and test real Hilbert spaces,  $\U$ and $\V$, respectively.  Namely,
\begin{equation}\label{intro1}
 u^* = \arg\min_{u\in \U} \Vert Bu-l\Vert_{\V'} = \arg\min_{u\in \U} \max_{\substack{\Vert v\Vert_\V = 1}} (Bu-l)(v),
\end{equation} where $B:\U\longrightarrow\V'$ is the differential operator governing the PDE in variational form,  $u^*$ is the exact solution,  $l\in\V'$ is the right-hand side,  and $\V'$ denotes the dual space of $\V$.  This naturally leads to employing Generative Adversarial Networks (GANs) \cite{goodfellow2020generative} by approximating $u$ and $v$ with two NNs.  Unfortunately,  this approach presents a severe numerical limitation: the Lipschitz continuity constant of the test maximizers with respect to the trial functions becomes arbitrarily large when approaching the exact solution. Indeed,  the corresponding test maximizer is highly non-unique in the limit.  This provides an inherent lack of numerical stability of the method,  which is confirmed by our numerical experiments with simple model problems.

To overcome the above limitations, we reformulate the residual minimization as a minimization of a Ritz functional fed by optimal test functions \cite{demkowicz2011class,  demkowicz2014overview}. Since optimal test functions are in general unknown, we compute them for each trial function using another Ritz method. Thus, the resulting scheme is a nested double-loop Ritz minimization method: the outer loop seeks the trial solution,  while the inner loop seeks the optimal test function for each trial function.  We call this the \emph{Double Ritz Method}.

In some occasions, the trial-to-test operator that maps each trial function with the corresponding optimal test function is available. For example,  when the problem is symmetric (in which case $\U = \V$), positive definite, and we select the norm induced by the bilinear form,  the trial-to-test operator is the identity; or when selecting the strong variational formulation, the trial-to-test operator is the one given by the PDE operator. In these cases, the Double Ritz Method reduces to a single-loop Ritz minimization (see Section \ref{Generalized Ritz method} for further details).  Thus, the Double Ritz Method is a general method for solving PDEs in different variational forms,  which in some particular cases simplifies into a single-loop Ritz minimization method.

Thanks to NNs, we find a simple and advantageous computational framework to approximate and connect the trial and test functions between the outer- and inner-loop minimizations in the Double Ritz Method, a task that is difficult to tackle with traditional numerical methods.  In this work,  we propose using one network to represent the trial functions, and another network to represent the local actions of the trial-to-test operator. Thus, the composition of both networks represents the (optimal) test functions,  and we preserve the trial dependence of the test functions during the entire process.  We call the resulting NN-based method the \emph{Deep Double Ritz Method} (D$^2$RM).

While the D$^2$RM replicates existing residual minimization methods ---and related Petrov-Galerkin methods \cite{demkowicz2011class,  demkowicz2014overview, cohen2012adaptivity,  broersen2018stability, demkowicz2020dpg}--- in the context of NNs,  we fall short to provide a detailed mathematical convergence analysis. This is because our network architectures generate manifolds instead of discrete vector spaces \cite{petersen2021topological, lei2020geometric}, which represents a departure from the traditional mathematical approach. Related to this,  we encounter the usual drawbacks of lack of convexity between the trainable parameters of the network with respect to the loss functions,  which is critical to make a proper diagnosis of the optimizer during training. 

The remainder of this work is as follows.  Section \ref{section:Mathematical framework} derives the Double Ritz method from residual minimization at the continuous level and Section \ref{section:Adversarial Neural Networks} introduces it in the NN framework.  Section \ref{section:Implementation} provides implementation details and Section \ref{section:Numerical results} describes numerical experiments.  Finally,  Section \ref{section:Conclusions and future work} summarizes and concludes the work.

\section{From residual minimization to Ritz-type methods}
\label{section:Mathematical framework}

We introduce the residual minimization approach, followed by a saddle-point formulation and an alternative Double Ritz method at the continuous level.  Subsequently, we describe three particular cases for which the Double Ritz method simplifies into a single Ritz method. We follow the reasoning in \cite{demkowicz2011class, demkowicz2014overview} at the continuous level.

\subsection{Residual minimization}

We consider the following abstract variational formulation:
\begin{equation}\label{PG}
\displaystyle{\left\|
\begin{tabular}{l}
Find $u^* \in \U$ such that\\
$b(u^*,v)=l(v), \; \forall v\in \V$,
\end{tabular}
\right.}
\end{equation} where $\U$ and $\V$ are real Hilbert \emph{trial} and \emph{test} spaces, respectively, $b:\U\times \V\longrightarrow\R$ is a bilinear form, and $l:\V\longrightarrow\R$ is a continuous linear functional. Equivalently, in operator form:
\begin{equation}\label{PG_operator}
\displaystyle{\left\|
\begin{tabular}{l}
Find $u^*\in \U$ such that\\
$Bu^*=l$,
\end{tabular}
\right.} 
\end{equation}
where $B:\U\longrightarrow \V'$ is the operator defined by $(Bu)(v) := b(u,v)$, $\V'$ denotes the topological dual of $\V$, and $l\in \V'$.  The equivalent Minimum Residual formulation reads as
\begin{equation}\label{DualRes}
    u^* = \arg\min_{u\in \U} \Vert Bu-l\Vert_{\V'},
\end{equation} where $Bu-l\in\V'$ is the \emph{residual} for each trial function $u\in\U$.

To guarantee well-posedness of problem \eqref{PG_operator}, we assume
\begin{equation}
\{ v\in\V : b(u,v)=0, \forall u\in\U\}=\{0\},
\end{equation} and that $B$ is continuous and bounded from below, i.e.,
\begin{equation}
\gamma \Vert u\Vert_\U \leq \Vert Bu\Vert_{\V'} \leq M \Vert u\Vert_\U, \qquad u\in\U,
\end{equation} for some positive constants $M\geq \gamma$.  Then, the error in $\U$ is equivalent to the residual in $\V'$ in the following sense:
\begin{equation}\label{residual_error_relation}
\frac{1}{M} \Vert Bu-l \Vert_{\V'} \leq \Vert u-u^*\Vert_{\U} \leq \frac{1}{\gamma} \Vert Bu-l \Vert_{\V'}, \qquad u\in\U.
\end{equation} However,  evaluating the norm of the residual in the dual space is challenging.  In what follows, we examine two alternatives to evaluate and minimize it.

\subsection{Saddle-point problem}\label{section:Approach1}
The norm in $\V'$ is defined in terms of the norm in $\V$ as follows:
\begin{equation}\label{dual}
\Vert l\Vert_{\V'}:=\sup_{\Vert v\Vert_\V = 1} l(v),  \qquad l\in\V'.
\end{equation} Combining  \eqref{DualRes} and \eqref{dual} yields:
\begin{equation}
    u^* = \arg\min_{u\in \U} \max_{\Vert v\Vert_\V = 1} \J(u,v), \qquad \J(u,v):=(Bu-l)(v). \label{infsup_residual}
\end{equation} In the above,  the residual becomes the null operator for the exact solution, i.e., $(Bu^*-l)(v)=0$ for all $v\in \V$, which implies that the test maximizer is highly non-unique in the limit. In addition,  the operator that maps each trial function $u\in\U$ to its unitary test maximizer is not Lipschitz continuous as we approach the exact solution $u^*$ (see \ref{appendix}), leading to an unstable numerical method.

\subsection{Double Ritz Method with optimal test functions}\label{section:Double Ritz Method with optimal test functions}
The Riesz Representation Theorem allows us to work isometrically on the test space instead of on its dual. In particular, for the residual, we have
\begin{equation}
\Vert Bu-l\Vert_{\V'} = \Vert R^{-1}_\V(Bu-l)\Vert_{\V},
\end{equation} where $R_\V: \V\ni v\longmapsto (v,\cdot)_\V\in\V'$ denotes the Riesz operator.  This relation suggests considering the \emph{trial-to-test} operator $T:\U\longrightarrow\V$ defined by $T:=R^{-1}_\V B$,  since it relates the error in $\U$ with the Riesz representative of the residual in $\V$, i.e.,
\begin{equation} \label{error_residual_representation}
T(u-u^*) = R^{-1}_\V (Bu-l) \in \V.
\end{equation} The images of trial functions through $T$ are known as \emph{optimal test functions}, and they allow us to rewrite \eqref{PG_operator} in terms of the symmetric and positive definite variational problem (cf. \cite{demkowicz2014overview})
\begin{equation}\label{Tproblem}
\displaystyle{\left\|
\begin{tabular}{l}
Find $u^* \in \U$ such that\\
$(T u^*, T u)_\V = l(T u), \; \forall u\in \U$,
\end{tabular}
\right.} 
\end{equation} which is equivalent to minimize the following quadratic functional:
\begin{equation} \label{Ritz_u}
    u^* = \arg\min_{u\in \U} \F_T(u), \qquad \F_T(u):=(\F\circ T)(u)=\frac{1}{2}\Vert T u\Vert_\V^2 -l(T u).
\end{equation} Here, $\F:\V\longrightarrow\R$ is the traditional Ritz functional given by $\F(\cdot)=\frac{1}{2}\Vert \cdot\Vert_\V^2-l(\cdot)$.  Indeed, $\F_T$ acts as a generalization of the Ritz functional into more general problems (cf. Section \ref{Generalized Ritz method}).  The relation between residual minimization \eqref{DualRes} and the generalized Ritz minimization \eqref{Ritz_u} is the following:
\begin{align}\label{residual_min_Ritzu}
\F_T(u)-\F_T(u^*) = \frac{1}{2}\Vert Bu-l\Vert_{\V'}^2,
\end{align} which easily deduces from the definition of $\F_T$.

Now,  the challenge is to compute $Tu\in\V$ when iterating along $u\in \U$,  since $T$ is unavailable in general.  Notice that finding $Tu\in\V$ is equivalent to solving the symmetric and positive definite variational problem
\begin{equation}\label{VarVh}
\displaystyle{\left\|
\begin{tabular}{l}
Find $Tu \in \V$ such that\\
$(T u,  v)_\V = b(u,v), \; \forall v\in \V$,
\end{tabular}
\right.} 
\end{equation} which, as before,  is equivalent to the following Ritz minimization:
\begin{equation} \label{Ritz_v} 
    \text{Given } u\in\U:\quad T u = \arg\min_{v\in \V} \L_u(v), \qquad \L_u(v):=\frac{1}{2}\Vert v\Vert_\V^2 - b(u,v).
\end{equation} However,  characterizing $Tu$ by means of \eqref{Ritz_v} remains impractical when iterating along $u\in\U$ to minimize \eqref{Ritz_u}, since directional derivatives of $T$ at $u\in\U$ are inaccessible.  To overcome this,  instead of iterating along $\V$ to seek the optimal test function for a given $u\in\U$, we consider a suitable set $\M$ of operators from $\U$ to $\V$ whose directional derivatives are accessible and the attainability of $Tu$ is guaranteed for all $u\in\U$. Then, we reformulate \eqref{Ritz_v} as seeking a candidate $\tau_u\in\M$ that when acting on $u\in\U$ returns $T u\in\V$. Namely,
\begin{equation} \label{Ritz_T}
    \text{Given } u\in\U: \quad \tau_{u} = \arg\min_{\tau\in\M} \L_u(\tau(u)) \quad \text{ and } \quad \tau_u(u)=Tu\in\V.
\end{equation} Notice that by performing \eqref{Ritz_T},  we ensure that the minimizer $\tau_u$ acts as $T$ \emph{only} at the given $u\in\U$,  i.e., $\tau_u(w)$ may be far from $T w$ whenever $u\neq w$.  Moreover,  depending on the construction of $\M$,  $\tau_u$ is possibly non-unique.

Hence,  by performing a nested minimization of \eqref{Ritz_T} within \eqref{Ritz_u},  we solve the problem at hand without explicitly dealing with the full operator $T$.  We call this the \emph{Double Ritz Method}. Algorithm \ref{alg:outer-inner loop} depicts its nested-loop optimization strategy that iterates separately either with elements in $\U$ or in $\M$.

\begin{algorithm}
\caption{Double Ritz Method}\label{alg:outer-inner loop}
Initialize $u\in \U$ and $\tau\in\M$\;
\While{\upshape not converged}{
    Find $\tau_{u} \in \arg\min_{\tau\in \M} \L_u(\tau u)$\;
    $u \leftarrow$ following candidate in $\U$ minimizing $\F_{\tau_u}(u)$\;
}
\Return $u$
\end{algorithm}

\subsection{Generalized Ritz Methods}\label{Generalized Ritz method}

We show three scenarios where the Double Ritz Method simplifies into a single Ritz minimization, e.g., when the trial-to-test operator is available in \eqref{Ritz_u}.

\begin{enumerate}
\item[(a)] \textbf{(Traditional) Ritz Method.} If the bilinear form $b(\cdot,\cdot)$ is symmetric and positive definite, we have $\V=\U$. Selecting the inner product as the bilinear form yields $T u = u$ for all $u\in \U$,  i.e., $T$ is the identity operator.

\item[(b)] \textbf{Strong formulation.} Let $A:\D(A)\longrightarrow L^2(\Omega)$ denote a PDE operator with domain $\D(A)$.  If we select $b(u,v):=(Au,v)_{L^2(\Omega)}$, $\U:=\D(A)$ endowed with the graph norm,  and $\V:=L^2(\Omega)$, we obtain $Tu = Au$ for all $u\in \U$, i.e., $T$ is the PDE operator.

\item[(c)] \textbf{Ultraweak formulation.} Let $A':\D(A')\longrightarrow L^2(\Omega)$ denote the adjoint operator of the PDE operator $A:\D(A)\longrightarrow L^2(\Omega)$, i.e., $(Au,v)_{L^2(\Omega)}=(u,A' v)_{L^2(\Omega)}$. If we select $b(u,v):=(u,A' v)_{L^2(\Omega)}$, $\U:=L^2(\Omega)$,  and $\V:=\D(A')$ endowed with the graph norm, we obtain $A' Tu = u$ for all $u\in \U$. 

Following the ideas from \cite{brunken2019parametrized, demkowicz2020dpg}, we can first solve for the optimal test function of the trial solution:
\begin{equation}\label{adjoint_minimization}
Tu^*=\arg\min_{v\in \V} \F'(v), \qquad \F'(v):=\frac{1}{2}\Vert A' v\Vert^2_{L^2(\Omega)}-l(v), \end{equation}
and then apply $A'$ to the minimizer to recover the trial solution: $u^*=A'Tu^*$.  We call \emph{Adjoint Ritz Method} to this Ritz minimization with post-processing based on the use of the $A'$ operator.
\end{enumerate}

To illustrate the above three cases, we show a simple PDE problem with different variational formulations. 

\begin{example}[Poisson's Equation]\label{Poisson's Equation}
Let $f\in L^2(0,1)$ and consider the following 1D pure diffusion equation with homogeneous Dirichlet boundary conditions over $\Omega=(0,1)$:
\begin{equation}\label{1DLap}
\begin{cases}
-u''=f, &\\
u(0)=u(1)=0. &
\end{cases}
\end{equation} We multiply the PDE by a test function and integrate over $\Omega$. Depending on the number of times we integrate by parts to derive the variational formulation, we obtain the following:
\begin{enumerate}
\item \textbf{Strong formulation.} Without integration by parts. Then, $\U:=H^2(0,1)\cap H^1_0(0,1)$, $\V:=L^2(0,1)$, $b(u,v):=\int_0^1 -u''v$, and $Tu=-u''$ for all $u\in \U$. This is the case {\upshape (b)} above.
\item \textbf{Weak formulation.} We integrate by parts once, passing one derivative from the trial to the test function. Then, $\U:=H_0^1(0,1)=:\V$, $b(u,v):=\int_0^1 u'v'$, $(v_1, v_2)_\V:=b(v_1,v_2)$, and $Tu=u$ for all $u\in \U$.  This is the case {\upshape (a)} above.
\item \textbf{Ultraweak formulation.}  We integrate by parts twice, passing the two derivatives of the trial to the test function. Then, $\U:=L^2(0,1)$, $\V:=H^2(0,1)\cap H^1_0(0,1)$,  $b(u,v):=\int_0^1-uv''$,  $(v_1,v_2)_\V:=(-v_1'',-v_2'')_{L^2(\Omega)}$,  and $-(Tu)''=u$ for all $u\in \U$. This is the case {\upshape (c)} above.
\end{enumerate}
\end{example}

All three variational formulations are valid, although they exhibit different convergence behaviors that go beyond the scope of this work.

\section{Approximation with Neural Networks}
\label{section:Adversarial Neural Networks}

In practice,  instead of seeking along $\U$, $\V$,  and $\M$, we seek along corresponding computationally accessible subsets.  This section elaborates on employing NNs for the computability of the proposed methods.

\subsection{Discretization}

Let $u_\n:\Omega\longrightarrow\R$ denote a NN with corresponding set of learnable parameters $\theta_u$, where $\Omega$ denotes the spatial domain related to a boundary value problem.  We use this NN to approximate trial functions. Thus,
\begin{equation}
    \U_{\n} := \{u_\n(\cdot\;;\theta_u):\Omega\longrightarrow\R \}_{\theta_u\in \Theta_u}
\end{equation} is the set of all possible configurations of learnable parameters (a.k.a \emph{realizations}) for $u_{\n}$, where $\Theta_u$ denotes the domain of $\theta_u$.  Similarly, $v_{\n}:\Omega\longrightarrow\R$ and $\tau_{\n}:\U_{\n}\longrightarrow \V_{\n}$ will denote NNs with corresponding sets of learnable parameters $\theta_v\in\Theta_v$ and $\theta_T\in\Theta_T$, and sets of realizations $\V_\n$ and $\M_\n$, respectively. In the end, the architectures of the NNs determine the approximation capacity of our methods, but its study is beyond the scope of this work (see, e.g.,  \cite{hornik1989multilayer,  csaji2001approximation,  kratsios2020non}). 

We select fully-connected architectures for the NNs, namely,
\begin{subequations}
\begin{alignat}{4} \label{FC_architecture}
    & u_{\n}(x_1) &\ =\ & L_{k+1} \circ L_k\circ \cdots \circ L_2 \circ L_1(x_1), &\qquad k\geq 1, \\
   & \phantom{u_\n(}x_i\phantom{)} & \ = \ & L_i(x_{i-1}) = \varphi(W_i x_{i-1} + b_i), &\qquad 2\leq i\leq k,\\
    & & & L_{k+1}(x_{k}) = W_{k+1} x_{k}, &\qquad
\end{alignat} 
\end{subequations} where $x_1$ is the input vector of the network, $W_i x_{i-1}+b_{i}$ is an affine transformation, $\varphi$ is an activation function,  $W_{k+1} x_k$ is a linear combination of the components of vector $x_k$ with coefficients in $W_{k+1}$, and $k$ is the depth of the NN. In particular, the set of learnable parameters consists of 
\begin{equation}
\Theta_u:=\{W_i\in\R^{n_i\times n_{i-1}}, b_i\in\R^{n_i\times 1} : 1\leq i\leq k\} \cup \{W_{k+1}\in\R^{1\times n_{k}}\}, 
\end{equation} where $n_{i-1}$ and $n_i$ are the input and output dimensions of layer $L_i$. 

To ensure the containment of the set of realizations within the trial space (i.e., $\U_\n\subset \U$), we select convenient activation functions (e.g., $\varphi = \tanh$) and strongly impose Dirichlet boundary conditions whenever is needed. For the latter, we redefine the above free-boundary NN architecture as 
\begin{equation}
    u_\n(x) \leftarrow u_D(x) + D(x) \cdot u_\n(x),
\end{equation} where $D(x)$ is an auxiliary continuous function such that $D\vert_{\Gamma_D} = 0$ and $D\vert_{\overline{\Omega}\setminus\Gamma_D} \neq 0$, with $\Gamma_D$ denoting the Dirichlet boundary, and $u_D$ is a lift of $u\vert_{\Gamma_D}$.  We consider similar architecture settings for $v_\n$ (see Figure \ref{figure:implementation_sketch_MIN_MAX}) and $\tau_\n$ (see Figure \ref{figure:implementation_sketch_MIN_MIN}).

Table \ref{table_methods_NNs} shows the methods introduced at Sections \ref{section:Approach1}-\ref{Generalized Ritz method} in the framework of NNs.

\begin{table}[htb]
\centering
\resizebox{\textwidth}{!}{%
\begin{tabular}{@{}|r|l|l|@{}}
\toprule
\multicolumn{1}{|c|}{\textbf{Method}} & \multicolumn{1}{c|}{\textbf{Functional(s)}}                                                                                                                                                                                                                    & \multicolumn{1}{c|}{\textbf{Optimization}}                                                                                                                                                                                  \\ \midrule
Weak Adversarial Networks             & $\displaystyle \J(u_\n,v_\n)=(Bu_\n-l)\left(\frac{v_\n}{\lVert v_\n\rVert_V}\right)$                                                                                                                                                                     & $\displaystyle u^*_\n = \arg\min_{u_\n\in\U_\n} \max_{v_\n\in\V_\n} \J(u_\n,v_\n)$                                                                                                                         \\ \midrule
Generalized Deep Ritz Method          & \begin{tabular}[c]{@{}l@{}}$\displaystyle \;\;\;\;\;\;\F_T(u_\n)=\tfrac{1}{2} \lVert Tu_\n\rVert_V^2 - l(Tu_\n)$                                                                                                                                                                                    \end{tabular} & $\displaystyle u^*_\n = \arg\min_{u_\n\in\U_\n} \F_T(u_\n)$                                                                                                                                                                 \\ \midrule
Deep Double Ritz Method               & \begin{tabular}[c]{@{}l@{}}$\displaystyle \;\;\;\F_{\tau_\n}(u_\n)=\tfrac{1}{2} \lVert \tau_\n(u_\n)\rVert_V^2 - l(\tau_\n(u_\n))$\vspace{0.2cm}  \\ $\displaystyle \;\;\;\L_{u_\n}(\tau_\n) =\tfrac{1}{2} \lVert \tau_\n(u_\n)\rVert_V^2 - (Bu_\n)(\tau_\n(u_\n))$\end{tabular} & \begin{tabular}[c]{@{}l@{}}$\displaystyle u^*_\n = \arg\min_{u_\n\in\U_\n} \F_{\tau_{u_\n}}(u_\n)$ \\ $\displaystyle \tau_{u_\n} = \arg\min_{\tau_\n\in\M_\n} \L_{u_\n}(\tau_\n(u_\n))$\end{tabular} \\ \bottomrule
\end{tabular}%
}
\caption{The min-max approach,  the Generalized Ritz Method, and the Double Ritz Method in the context of NNs: \emph{Weak Adversarial Networks (WANs)}, \emph{the Generalized Deep Ritz Method (GDRM)}, and \emph{the Deep Double Ritz Method (D$^2$RM)}. }
\label{table_methods_NNs}
\end{table}

\subsection{Differentiation and integration}\label{section:Quadrature rules}

Forms $b(\cdot,\cdot)$, $l(\cdot)$, and $\Vert \cdot\Vert_\V$ consist of definite integrals of combinations of NNs and/or their derivatives.  We perform \emph{automatic-differentiation} to evaluate the derivatives of the NNs \cite{baydin2018automatic, margossian2019review},  and approximate corresponding integrals via a quadrature rule:
\begin{equation}
    \int_\Omega I(x) dx \approx \sum_{i=1}^N \omega_i \cdot I(x_i).
\end{equation} Here,  $I$ is the integrand and $\omega_i$ is the integration weight related to the integration node $x_i\in\Omega$.  Thus,  the quality of the approximation depends on the sample and the weight selection.

In Monte Carlo integration, the weights act as uniform averages of the sum of evaluations of the integrand ($\omega_i = \text{Vol}(\Omega)/N$) \cite{davis2007methods, leobacher2014introduction}.  It allows to select a new random sample of nodes for each integration during the training and, therefore,  circumvents the well-known overfitting problem \cite{rivera2022quadrature}. Unfortunately,  Monte Carlo integration requires an immense sample size to ensure admissible integration errors,  which impacts the computational memory resources and execution speed during training.  In addition,  Monte Carlo integration has difficulties when dealing with singular solutions that we consider in this work.

To maintain the advantages of Monte Carlo integration, reduce the runtime and sample size (from thousands to hundreds of points), and control the integration error around singularities, we consider a \emph{composite intermediate-point quadrature rule} (see Algorithm \ref{alg:intermediate-point quadrature rule} for the domain $\Omega=(0,1)$) that generates random integration points following a beta $\beta(a,b)$ probability distribution \cite{gupta2004handbook, weinzierl2000introduction}. We tune the hyperparameters $a$ and $b$ conveniently according to our problem specifications.

\begin{algorithm}
\caption{Randomized composite intermediate-point quadrature rule to approximate $\int_0^1 I(x) dx$}\label{alg:intermediate-point quadrature rule}
Generate $x_i\in (0,1)$ for $1\leq i\leq N$\;
Sort $\{x_i:1\leq i\leq N\}$ so that $x_{i-1}<x_i$ for all $1\leq i\leq N$\;
Evaluate $I$ in $\{x_i:1\leq i\leq N\}$\;
Define $m_0:=0$,  $m_i:=(x_{i-1}+x_i)/2$ for $1\leq i\leq N-1$, and $m_{N}:=1$\;
Define $\omega_i:=m_i-m_{i-1}$ for $1\leq i\leq N$\;
\Return $\sum_{i=1}^N \omega_i\cdot I(x_i)$\;
\end{algorithm}

\review{
For $\Omega=(0,1)\times(0,1)$,  we perform lines 1 and 2 of Algorithm \ref{alg:intermediate-point quadrature rule} on each axis,  and adapt lines 3-6 to a cartesian-product structure.}

\subsection{Loss functions and gradient-descent/ascent}
\label{section:Loss functions and gradient-descent/ascent optimizations}

We define the loss functions as sampling-dependent functionals that represent the quadrature rules approximating the analytic integrals (recall Table \ref{table_methods_NNs}):
\begin{subequations} \label{equation:integral_functional vs losses}
\begin{alignat}{4}
\J(u_\n, v_\n)&\approx \widehat{\J}(\{x_i\}_i;\theta_u,\theta_v), \label{equation:integral_functional vs losses a}\\
\F_T(u_\n)&\approx \widehat{\F}_T(\{x_i\}_i;\theta_u),\\
\F_{\tau_\n}(u_\n)&\approx \widehat{\F}_{\tau_\n}(\{x_i\}_i;\theta_u),\\
\L_{u_\n}(\tau_\n(u_\n))&\approx \widehat{\L}_{u_\n}(\{x_i\}_i;\theta_\tau). \label{equation:integral_functional vs losses d}
\end{alignat} 
\end{subequations} The right-hand sides of equations \eqref{equation:integral_functional vs losses a}-\eqref{equation:integral_functional vs losses d} represent the loss functions, while the left-hand sides represent the corresponding analytic integrals. 

Now, we detail three different optimization strategies for training the NNs:
\begin{itemize}
\item \textbf{Weak Adversarial Networks (WANs).} We use the Stochastic Gradient-Descent/Ascent (SGD/A) optimizer \cite{ruder2016overview} to readjust the learnable parameters as follows:
\begin{subequations}
\begin{align}
    \theta_u \leftarrow \theta_u - \eta_u \frac{\partial\widehat{\J}}{\partial\theta_u}(\{x_i\};\theta_u, \theta_v), \label{gradient-descent}\\
    \theta_v \leftarrow \theta_v + \eta_v \frac{\partial\widehat{\J}}{\partial\theta_v}(\{x_i\};\theta_u, \theta_v),
    \label{gradient-ascent}
\end{align}
\end{subequations} where $\eta_u,\eta_v>0$ denote learning-rate coefficients.  We perform multiple iterations on the ascent for each iteration on the descent (see Algorithm \ref{alg:nested_minmax}). We call \emph{outer} and \emph{inner loops} to the minimization and maximization processes,  respectively,  because of the nested optimization structure.

\begin{algorithm}[htb]
\caption{WANs training}\label{alg:nested_minmax}
Initialize $\theta_u\in\Theta_u$ and $\theta_v\in\Theta_v$\;
\Comment{Outer loop}
\While{\upshape not converged}{
Select random sampling $\{x_i\}_i\subset\Omega$\;
$\theta_u \leftarrow \theta_u - \eta_u \frac{\partial\widehat{\J}}{\partial\theta_u}(\{x_i\}_i; \theta_u, \theta_v)$\;
 \Comment{Inner loop}
\While{\upshape not converged}{
Select random sampling $\{x_i\}_i\subset\Omega$\;
$\theta_v \leftarrow \theta_v + \eta_v \frac{\partial\widehat{\J}}{\partial\theta_v}(\{x_i\}_i;\theta_u, \theta_v)$\;
  }
}
\Return $\theta_u$
\end{algorithm}

\pagebreak

\item \textbf{Deep Double Ritz Method (D$^2$RM).} The training is similar as in Algorithm \ref{alg:nested_minmax}, but modifying the loss when jumping from the outer to the inner loop, and performing only gradient-descents for minimizations (see Algorithm \ref{alg:nested_minmin}). 

\begin{algorithm}[htb]
\caption{D$^2$RM training}\label{alg:nested_minmin}
Initialize $\theta_u\in\Theta_u$ and $\theta_\tau\in\Theta_\tau$\;
\Comment{Outer loop}
\While{\upshape not converged}{
Select random sampling $\{x_i\}_i\subset\Omega$\;
$\theta_u \leftarrow \theta_u - \eta_u \frac{\partial\widehat{\F}_{\tau_\n}}{\partial\theta_u}(\{x_i\}_i; \theta_u,\theta_\tau)$\;
\Comment{Inner loop}
\While{\upshape not converged}{
Select random sampling $\{x_i\}_i\subset\Omega$\;
$\theta_\tau \leftarrow \theta_\tau - \eta_\tau \frac{\partial\widehat{\L}_{u_\n}}{\partial\theta_\tau}(\{x_i\}_i; \theta_u, \theta_\tau)$\;
  }
}
\Return $\theta_u$
\end{algorithm}

\item \textbf{Generalized Deep Ritz Method (GDRM).} The optimization consists of a single-loop minimization (see Algorithm \ref{alg:single_min}) since the trial-to-test operator $T$ is explicitly available.  In particular, when the bilinear form is symmetric and positive definite  ---recall Section \ref{Generalized Ritz method}, item (a)---,  this method is the well-known Deep Ritz Method (DRM) \cite{yu2018deep}.

\begin{algorithm}[htb]
\caption{GDRM training}\label{alg:single_min}
Initialize $\theta_u\in\Theta_u$\;
\While{\upshape not converged}{
Select random sampling $\{x_i\}_i\subset\Omega$\;
$\theta_u \leftarrow \theta_u - \eta_u \frac{\partial\widehat{\F}_T}{\partial\theta_u}(\{x_i\}_i; \theta_u)$\;
}
\Return $\theta_u$
\end{algorithm}

\item \textbf{Adjoint Deep Ritz Method (DRM$)'$.} This method consists of a single-loop minimization (as in Algorithm \ref{alg:single_min}) with post-processing, only valid in ultraweak formulations.  Recall Section \ref{Generalized Ritz method}, item (c).

\end{itemize}

\subsection{Optimization}\label{section:optimization}

We select the Adam optimizer \cite{kingma2014adam} to carry out our experiments. In the case of nested optimizations, we select fully independent Adam optimizers for the outer and inner loops. We establish an accumulated maximum number of iterations for both nested loops, and we fix four inner-loop iterations for each outer-loop iteration (as considered in \cite{zang2020weak,  bao2020numerical} for WANs).  In the D$^2$RM,  we assume that a slight modification in $\theta_u$ translates into a slight variation in $u_\n$.  Then,  the variation in $Tu_\n$ is small because $T$ is continuous and, therefore, we expect that $\tau_\n(u_\n)$ can provide a good approximation of $Tu_{\n}$ after a small number of iterations in $\theta_\tau$.  

\section{Implementation}
\label{section:Implementation}

We use the Tensorflow 2 library (TF2) \cite{abadi2016tensorflow} within Python to implement our neural network architectures and loss functions, and to manage the random creation and flow of data. Specifically, we accommodate all of our implementations to Keras (\texttt{tf.keras}), a sublibrary of TF2 specially designed for executions in \emph{graph mode} that exploits high-performance computing \cite{joseph2021keras} (e.g.,  parallelized computing within GPUs \cite{meng2017training,  sergeev2018horovod}).

\subsection{Samples generation, input batch flow, encapsulation of models, and optimization}

Our inputs to networks are samples over the domain $\Omega$.  After feeding our networks with the batch of inputs, we combine the batch of outputs in a single loss prediction. We encapsulate the networks and losses inside a main model whose input and output are the batch of samples and the loss prediction, respectively. From the loss prediction, we optimize the learnable parameters of the networks (i.e, we fit the learnable parameters to the data) with the Keras built-in Adam optimizer (\texttt{tf.keras.optimizers.Adam}).  Figure \ref{figure:implementation_sketch_optimization} illustrates the described process at each training iteration.  

\begin{figure}[htb]
\centering
\scalebox{0.64}{\includegraphics{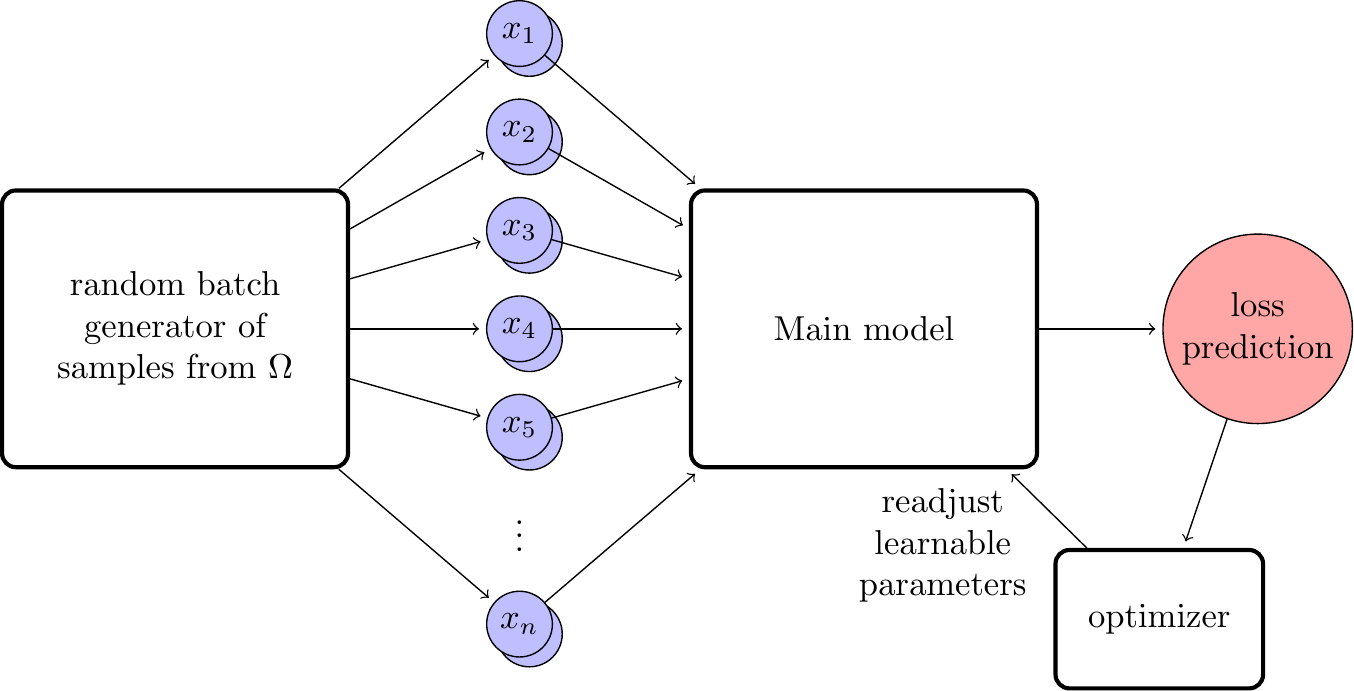}}
\caption{General flux sketch for the proposed methods at each training iteration.}
\label{figure:implementation_sketch_optimization}
\end{figure}

For the maximization involved in WANs, we reverse the sign of the gradients so that when feeding the (default) gradient-descent-based optimizer, it instead performs a gradient-ascent. 

\subsection{Design of models by methods}

We implement networks, losses, and operators as models and layers in TF2 from the redefinitions of the corresponding Keras base classes (\texttt{tf.keras.Model} and \texttt{tf.keras.Layer}). In WANs, we implement $u_\n$ and $v_\n$ as two independent models that are combined via a common non-trainable layer for the loss (see Figure \ref{figure:implementation_sketch_MIN_MAX}). In the DRM, we implement $u_\n$ as a model that subsequently connects with two non-trainable layers for the trial-to-test operator and the loss, respectively (see Figure \ref{figure:implementation_sketch_MIN}).  In the D$^2$RM, we implement $u_\n$ and $\tau_\n$ as two sequential models whose output feeds into two separate losses (see Figure \ref{figure:implementation_sketch_MIN_MIN}).  The loss functions are implemented as latent outputs of the main model, which together with a TF2-suitable boolean variable, activate and deactivate alternatively. Despite the significant compilation time that the two-branch model (for the D$^2$RM) takes compared with one-branch models (for WANs and the DRM)\footnote{Around one minute for the D$^2$RM vs. a couple of seconds for WANs and the DRM.},  its fitting execution in graph mode is as fast as that of one-branch models.

\begin{figure}[p]
\centering
\scalebox{0.7}{\includegraphics{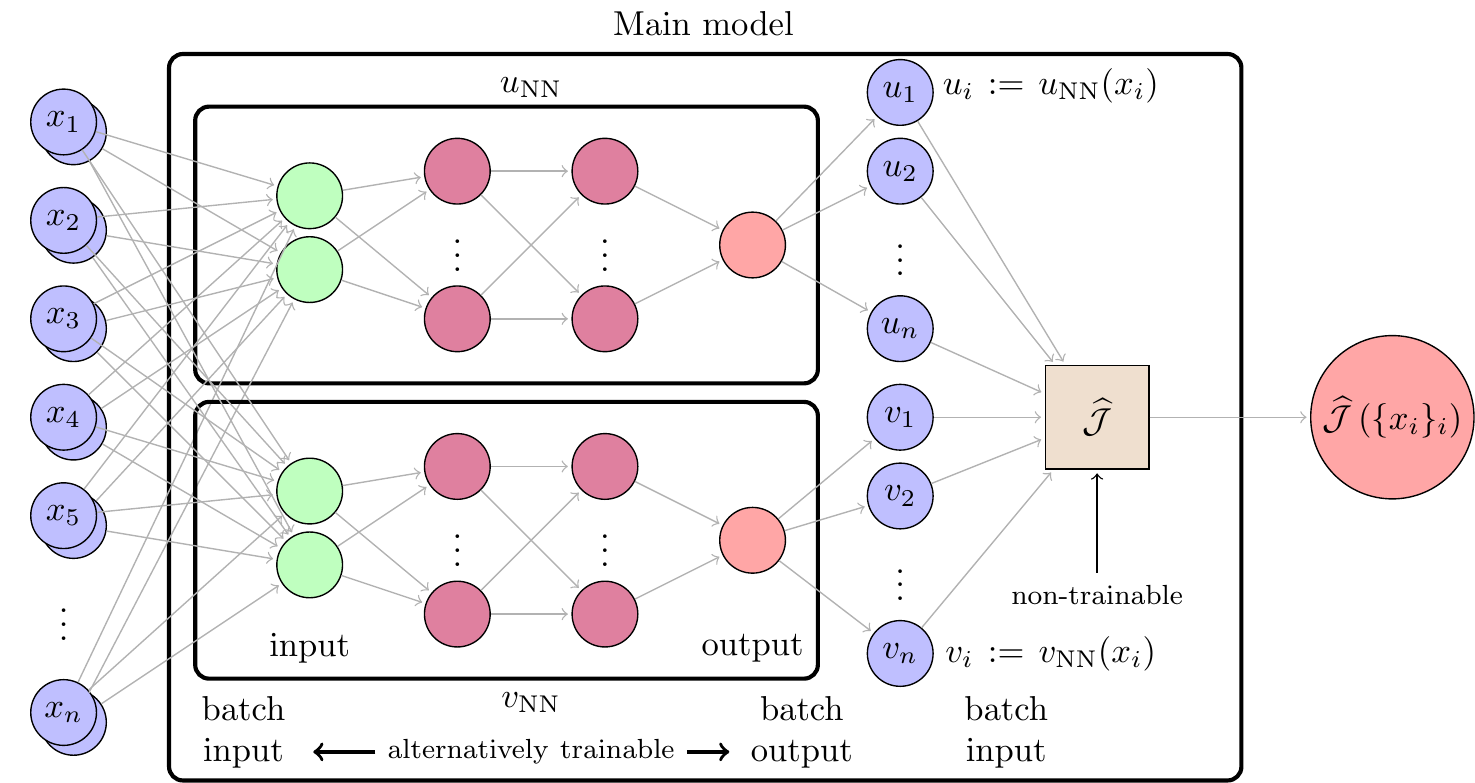}}%
\caption{Main model architecture for WANs. It consists of two independent NNs, $u_\n$ and $v_\n$,  combined via the loss $\widehat{\J}$.}
\label{figure:implementation_sketch_MIN_MAX}
\end{figure}
\begin{figure}[p]
\centering
\scalebox{0.7}{\includegraphics{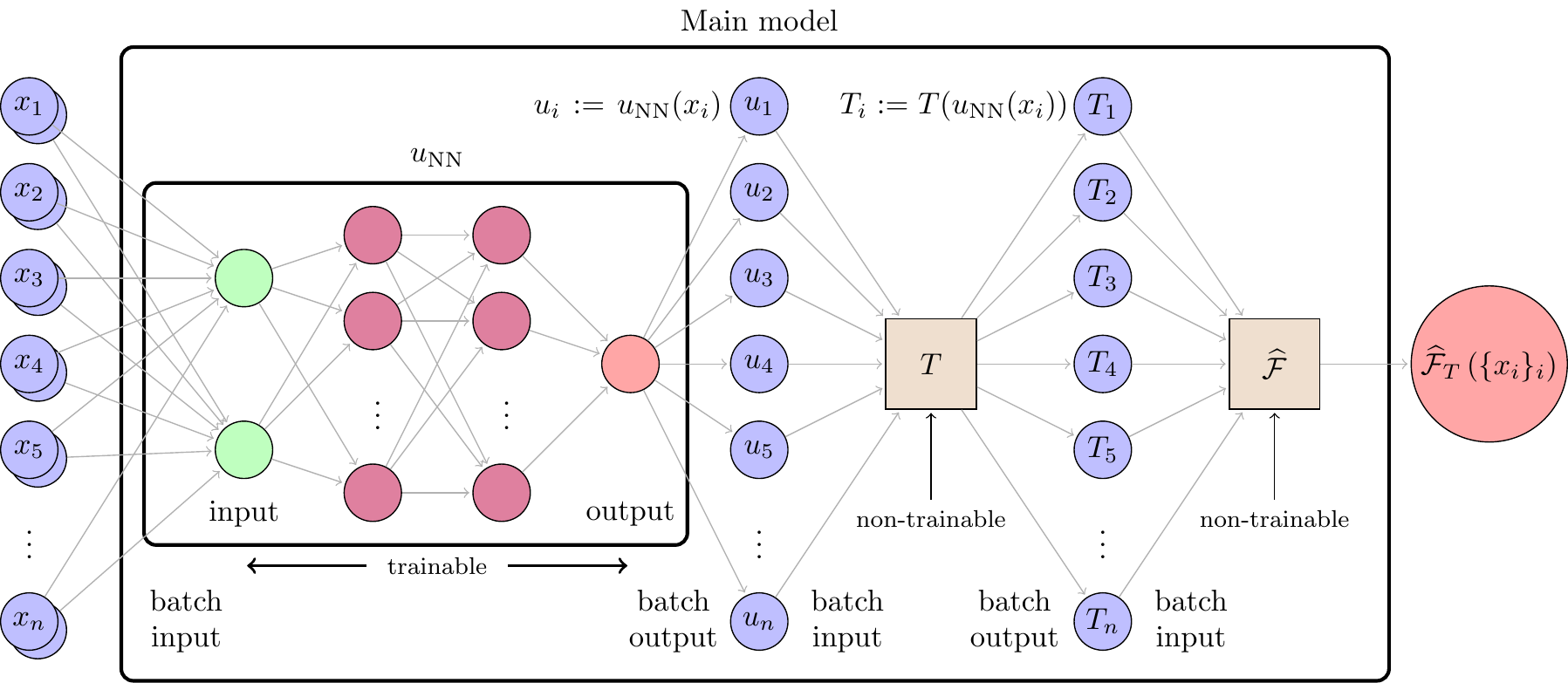}}%
\caption{Main model architecture for the DRM. It consists of a NN, $u_\n$, composed with $T$ and the loss $\widehat{\F}$.}
\label{figure:implementation_sketch_MIN}
\end{figure}
\begin{figure}[p]
\centering
\resizebox{\textwidth}{!}{%
\includegraphics{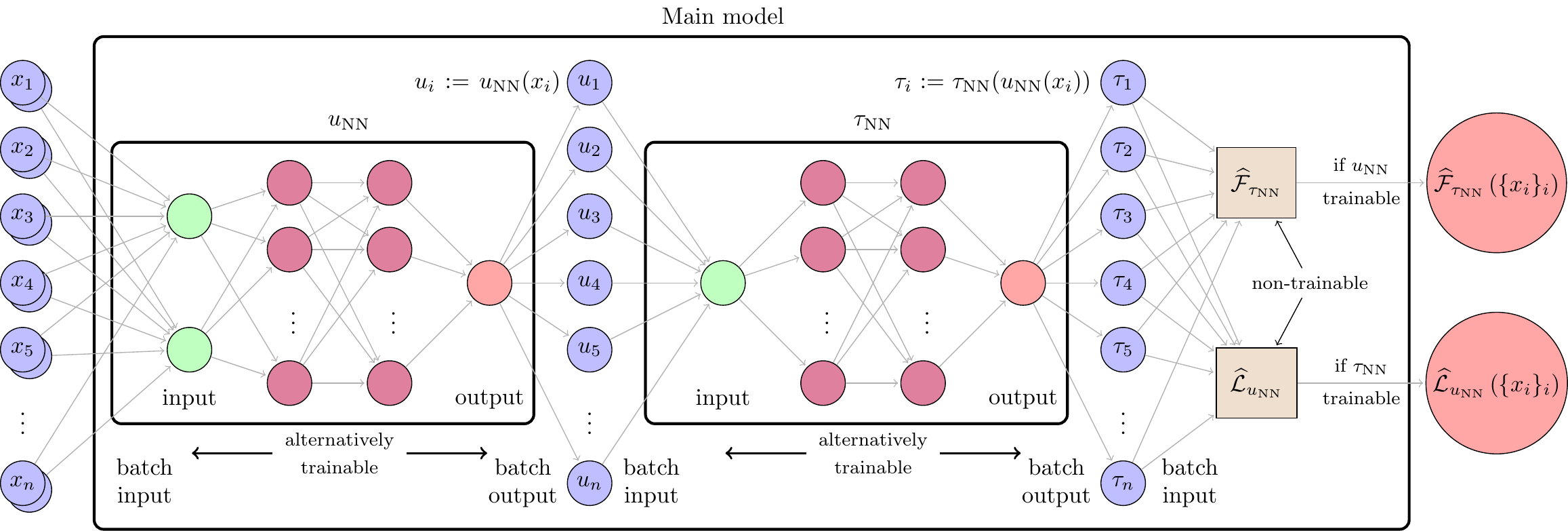}%
}
\caption{Main model architecture for the D$^2$RM. It consists of two NNs, $u_\n$ and $\tau_\n$,  equipped with losses $\widehat{\F}_{\tau_\n}$ and $\widehat{\L}_{u_\n}$.}
\label{figure:implementation_sketch_MIN_MIN}
\end{figure}

\subsection{Graph-mode execution dynamics}

We employ \emph{callbacks} to avoid interrupting the graph execution mode carried out by the Keras fitting instruction (\texttt{.fit}). Callbacks act during fitting and enable accessing certain elements of (main) models and modifying them. We utilize callbacks for loss monitoring (for WANs,  the DRM,  and the D$^2$RM), to activate or deactivate the trainability of the networks and switch between optimizers (for WANs and the D$^2$RM), or to interchange losses (for the D$^2$RM) during training when iterating either over the outer or the inner loop.

\section{Numerical results}
\label{section:Numerical results}

We show numerical experiments to compare the methods introduced above.  Section \ref{section51} considers a simple model problem and initally compares WANs, the DRM,  and the D$^2$RM.  Section \ref{section:Comparison of performances by approaches} makes a more profound comparison on a parametric problem.  Section \ref{section:Diffusion equation with a Dirac delta source} and \ref{section:Advection equation in ultraweak formulation} consider sources that lead to singular problems in pure diffusion and convection equations, respectively.  Finally, Section \ref{2D section} shows results on a 2D pure convection problem. 

\subsection{Initial comparison of WANs,  the DRM, and the D$^2$RM on a simple problem}\label{section51}

We select model problem \eqref{1DLap} in weak form with source $f=-2$, so the analytic solution is $u^*=x(x-1)$. Hence, $\U=H^1_0(0,1)=\V$ and $T$ is the identity operator.  We solve this problem employing WANs,  the DRM, and the D$^2$RM.

We select a two-layer fully-connected NN with 20 neurons on each layer and $\tanh$ activation functions for the architectures of $u_\n$, $v_\n$,  and $\tau_\n$.  We perform $200$ iterations for $u_\n$ in the three methods: WANs,  the DRM, and the D$^2$RM.  Since in WANs and the D$^2$RM we established four iterations to approximate the test maximizers (recall Section \ref{section:optimization}), we end up with a total of $1\mathord{,}000$ training iterations ($200$ for the trial function, and $800$ for the test functions). Moreover, we select batches of size $200$ for the training, and a uniform distribution for the sample generation. 

Figure \ref{figure:WANs_DRM_DDRM_x(x-1)} shows the $u_\n$ network predictions and errors of the three methods at the end of the training.  We observe that WANs produce a larger error than the DRM and the D$^2$RM.

\begin{figure}[htbp]
\centering
\begin{subfigure}{\textwidth}
\centering
\includegraphics{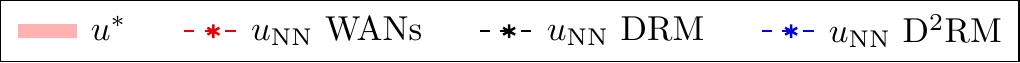}
\end{subfigure}\vskip 1em%
\begin{subfigure}[t]{0.3\textwidth}
\centering
\includegraphics{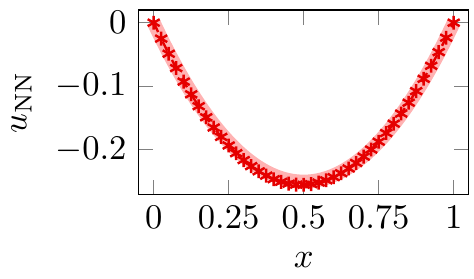}
\end{subfigure}\hskip 1em%
\begin{subfigure}[t]{0.3\textwidth}
\centering
\includegraphics{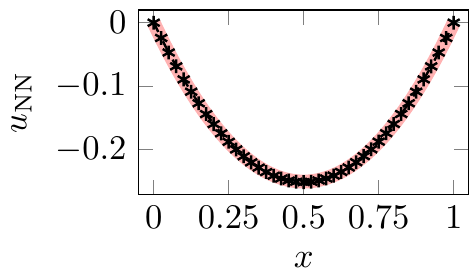}
\end{subfigure}\hskip 1em% 
\begin{subfigure}[t]{0.3\textwidth}
\centering
\includegraphics{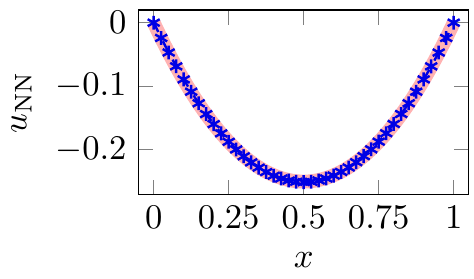}
\end{subfigure}\vskip 1em%
\begin{subfigure}[t]{0.3\textwidth}
\centering
\includegraphics{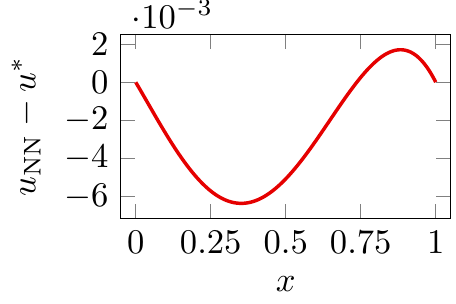}
\end{subfigure}\hskip 1em%
\begin{subfigure}[t]{0.3\textwidth}
\centering
\includegraphics{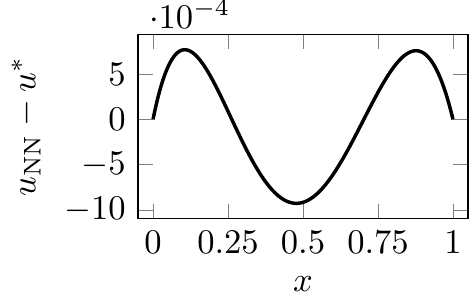}
\end{subfigure}\hskip 1em%
\begin{subfigure}[t]{0.3\textwidth}
\centering
\includegraphics{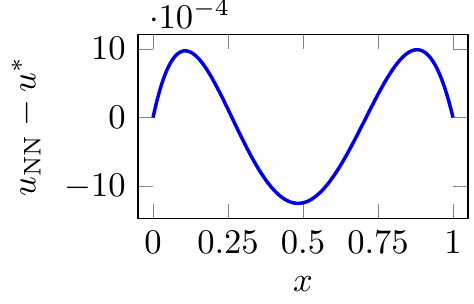}
\end{subfigure}\vskip 1em%
\caption{Trial network predictions and errors in WANs, the DRM, and the D$^2$RM at the end of the training in problem \eqref{1DLap} with analytic solution $u^*=x(x-1)$.}
\label{figure:WANs_DRM_DDRM_x(x-1)}
\end{figure}

Figure \ref{figure:WAN_x(x-1)_loss} shows the loss evolution for WANs.  Every five iterations, the loss decreases (iterations in $u_\n$), and increases in the remaining ones (while freezing the approximate solution and looking for the test maximizer).  From iteration $500$ onwards, the loss stops improving and oscillates above the optimal value. 

\begin{figure}[htbp]
\centering
\begin{subfigure}[b]{\textwidth}
\centering
\includegraphics{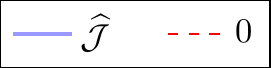}
\end{subfigure}\vskip 1em%
\begin{subfigure}[b]{0.8\textwidth}
\centering
\includegraphics{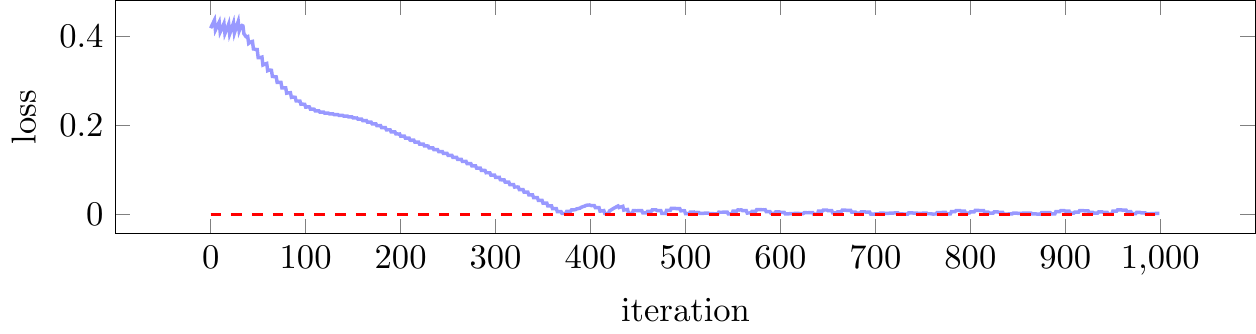}
\end{subfigure}\vskip 1em
\begin{subfigure}[b]{0.4\textwidth}
\centering
\includegraphics{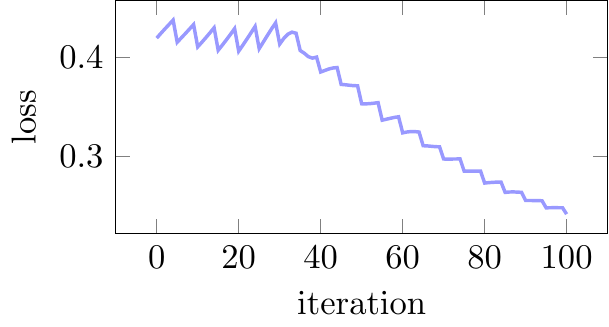}
\end{subfigure}\hskip 2em
\begin{subfigure}[b]{0.4\textwidth}
\centering
\includegraphics{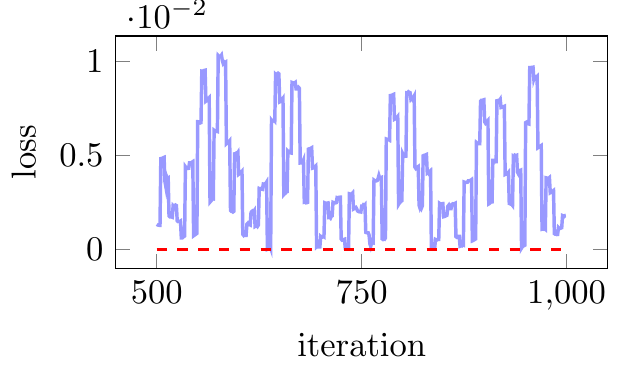}
\end{subfigure}
\caption{Loss evolution during the WANs training for problem \eqref{1DLap} with analytic solution $u^*=x(x-1)$.}
\label{figure:WAN_x(x-1)_loss}
\end{figure}

Figure \ref{figure:DRM_x(x-1)_loss} shows the loss evolution for the DRM.  Here, we have a single minimization,  so the observed noisy behavior of the loss towards the end of the training is due to the optimizer.  Because of the lower complexity of the training, we obtain a better convergence performance than with WANs.

\begin{figure}[htbp]
\centering
\begin{subfigure}{\textwidth}
\centering
\includegraphics{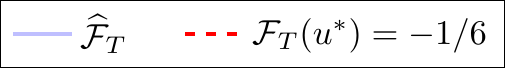}
\end{subfigure}\vskip 1em%
\centering
\begin{subfigure}[t]{0.5\textwidth}
\centering
\includegraphics{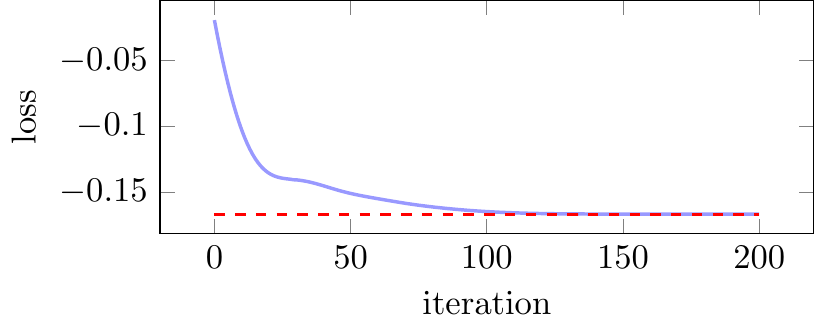}
\label{figure:DRM_loss_global}
\end{subfigure} \hskip 1em%
\begin{subfigure}[t]{0.3\textwidth}
\centering
\includegraphics{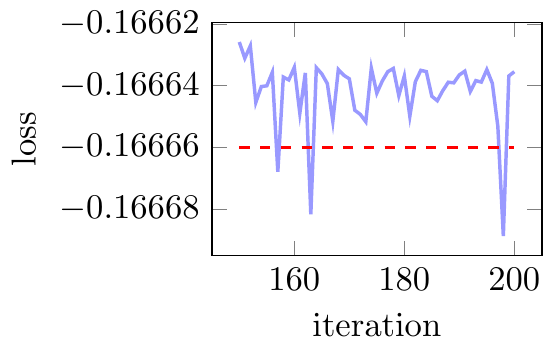}
\label{figure:DRM_loss_end}
\end{subfigure}
\caption{Loss evolution during the DRM training in problem \eqref{1DLap} with analytic solution $u^*=x(x-1)$.}
\label{figure:DRM_x(x-1)_loss}
\end{figure}

Figure \ref{figure:DDRM_x(x-1)_loss} shows the loss evolutions for the D$^2$RM.  In this method, we have a min-min optimization, where both losses are minimized alternatively.  At each iteration, we evaluate both $\widehat{\F}_{\tau_\n}$ and $\widehat{\L}_{u_\n}$, even if we are only optimizing with respect to one of them. We superimpose both losses, each one with its own scale (the left vertical axis corresponds to $\widehat{\F}_{\tau_\n}$ and the right vertical one to $\widehat{\L}_{u_\n}$).  Both losses exhibit a decreasing staircase shape with downward-sloping steps.  Jumps occur when the optimization is performed with respect to $\widehat{\F}_{\tau_\n}$, which suggests that $\widehat{\L}_{u_\n}$ takes longer to converge, as it depends on the convergence of $\widehat{\F}_{\tau_\n}$ (outer vs.  inner loop). 

\begin{figure}[htbp]
\centering
\begin{subfigure}{\textwidth}
\centering
\includegraphics{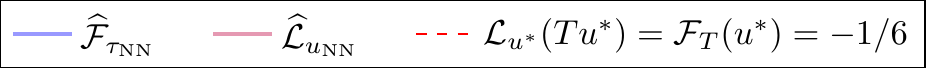}
\end{subfigure}\vskip 1em
\begin{subfigure}[t]{0.8\textwidth}
\centering
\includegraphics{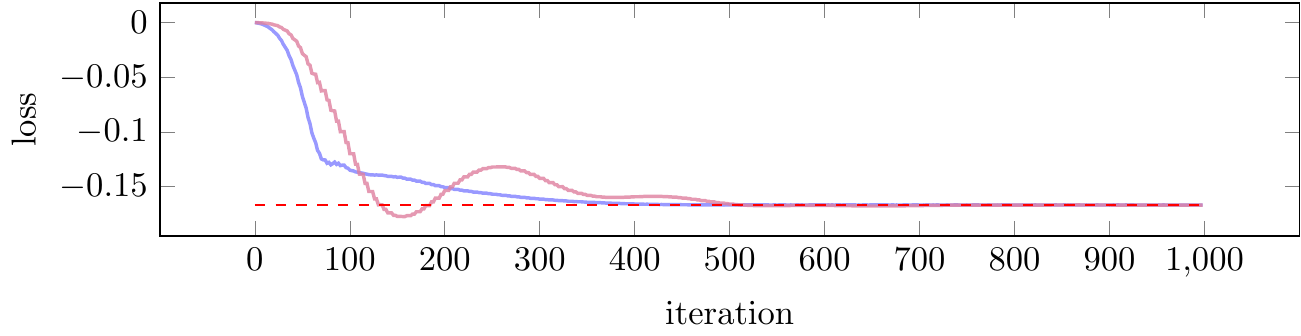}
\end{subfigure}\vskip 1em
\begin{subfigure}[t]{0.4\textwidth}
\centering
\includegraphics{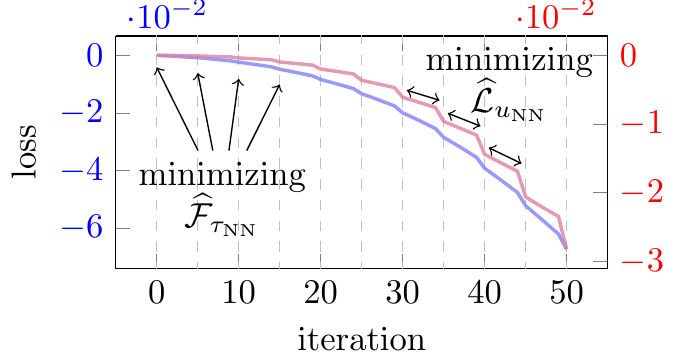}
\end{subfigure}\hskip 2em
\begin{subfigure}[t]{0.4\textwidth}
\centering
\includegraphics{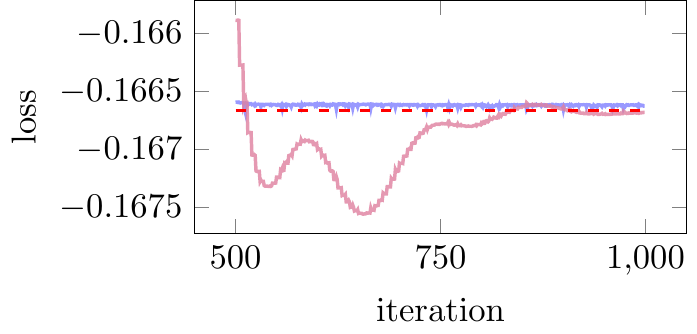}
\end{subfigure}
\caption{Loss evolution of D$^2$RM training for problem \eqref{1DLap} with analytic solution $u^*=x(x-1)$.}
\label{figure:DDRM_x(x-1)_loss}
\end{figure}

The relative errors of the trial network predictions\footnote{To approximate $\frac{\Vert u_{\n}-u\Vert_\U}{\Vert u\Vert_\U}\times 100$, we perform a composite intermediate-point rule with $10^4$ integration nodes for the numerator, and analytically calculate $\Vert u\Vert_\U = \sqrt{3}/3$ for the denominator.} at the end of the training are $3.12\%$, $0.99\%$,  and $1.31\%$ in WANs, the DRM, and the D$^2$RM, respectively.

\subsection{Comparison of WANs,  the DRM, and the D$^2$RM on smooth and singular problems.}\label{section:Comparison of performances by approaches}
Now,  we focus on the evolution of the relative error for the previous weak formulation of \eqref{1DLap}, but selecting the source so that the solution varies according to a parameter:
\begin{equation}
u_\alpha^* = x^{\alpha}(x-1)\in H^1_0(0,1), \qquad \alpha>1/2.
\end{equation}

\subsubsection{Without singularities: $\alpha\geq 1$}
We experiment individually for $\alpha \in\{2, 5,10\}$ with $5\mathord{,}000$ training  iterations for $u_\n$ in WANs, the DRM, and the D$^2$RM, which corresponds to a total of $25\mathord{,}000$ iterations for WANs and the D$^2$RM, when taking into account the iterations dedicated to the test maximizers. Table \ref{table:experiments2} displays the relative errors along different stages of the training, and Figure \ref{figure:xalpha(x-1)_predictions} shows the trial network predictions and error functions at the end of training.

\begin{table}[htbp]
\centering
\begin{tabular}{|c|c||c|c|c|c|c|}
\toprule
\multicolumn{2}{|c||}{\textbf{Training progress}} & $\mathbf{4}\bf{\%}$ & $\mathbf{20}\bf{\%}$ & $\mathbf{40}\bf{\%}$ & $\mathbf{60}\bf{\%}$ & $\mathbf{100}\bf{\%}$\\ \bottomrule
\toprule
\textbf{Method} & $\boldsymbol{\alpha}$ & \multicolumn{5}{c|}{$\frac{\Vert u_\n-u^*\Vert_\U}{\Vert u^*\Vert_\U}\times 100$}\\ \midrule
\multicolumn{1}{|l||}{\multirow{3}{*}{WANs}}     & $2$ &  $2.49\%$ & $3.43\%$  & $5.92\%$ & $7.40\%$ & $10.07\%$\\ \cline{2-7} 
\multicolumn{1}{|l||}{}                       & $5$ & $58.69\%$ & $41.78\%$ & $63.03\%$ & $74.06\%$ & $40.33\%$ \\ \cline{2-7} 
\multicolumn{1}{|l||}{}                       & $10$ & $93.15\%$ & $93.95\%$ & $89.04\%$ & $68.85\%$ & $367.61\%$ \\ \midrule
\multicolumn{1}{|l||}{\multirow{3}{*}{DRM}}  & $2$ & $3.40\%$ & $2.47\%$ & $1.17\%$ & $0.30\%$ & $0.23\%$ \\ \cline{2-7} 
\multicolumn{1}{|l||}{}                       & $5$ &  $50.50\%$&$8.53\%$&$2.83\%$&$2.27\%$&$1.59\%$\\ \cline{2-7} 
\multicolumn{1}{|l||}{}                       & $10$ &  $58.55\%$&$9.51\%$&$3.39\%$&$2.83\%$&$1.69\%$\\ \midrule
\multicolumn{1}{|l||}{\multirow{3}{*}{D$^2$RM}} & $2$ &  $3.27\%$&$1.84\%$&$0.31\%$&$0.27\%$&$0.54\%$\\ \cline{2-7} 
\multicolumn{1}{|l||}{}                       & $5$ & $59.13\%$&$12.93\%$&$2.92\%$&$2.52\%$&$1.56\%$\\ \cline{2-7} 
\multicolumn{1}{|l||}{}                       & $10$ & $82.31\%$ & $20.18\%$ & $6.24\%$& $2.68\%$& $2.60\%$\\ \bottomrule
\toprule
\textbf{Method} & $\boldsymbol{\alpha}$ & \multicolumn{5}{c|}{$\frac{\Vert \tau_\n(u_\n)- Tu^*\Vert_\V}{\Vert Tu^*\Vert_\V}\times 100$}\\ \midrule
\multicolumn{1}{|l||}{\multirow{3}{*}{D$^2$RM}} & $2$ &  $3.36\%$&$1.94\%$&$0.48\%$&$0.45\%$&$0.58\%$\\ \cline{2-7} 
\multicolumn{1}{|l||}{}                       & $5$ &  $59.13\%$&$12.93\%$&$2.90\%$&$2.11\%$&$1.55\%$\\ \cline{2-7} 
\multicolumn{1}{|l||}{}                       & $10$ &  $83.51\%$&$20.18\%$&$6.23\%$&$2.68\%$&$2.61\%$\\ \bottomrule
\end{tabular}%
\caption{Relative errors of $u_\n$ (in WANs, the DRM, and the D$^2$RM) and $\tau_\n(u_\n)$ (in the D$^2$RM) along different stages of the training progress in problem \eqref{1DLap} with analytic solution $u_\alpha^*=x^{\alpha}(x-1)$ and $\alpha\in\{2,5,10\}$. }
\label{table:experiments2}
\end{table}

\begin{figure}[htbp]
\centering
\begin{subfigure}[t]{\textwidth}
\centering
\includegraphics{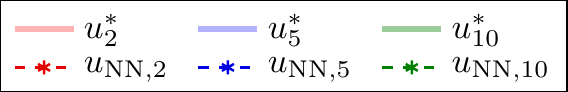}
\end{subfigure}\vskip 1em%
\begin{subfigure}[t]{0.31\textwidth}
\centering
\includegraphics{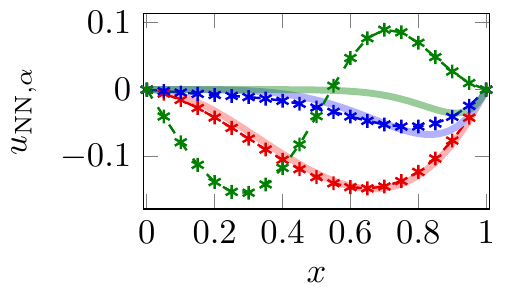}
\end{subfigure}\hskip 1em%
\begin{subfigure}[t]{0.31\textwidth}
\centering
\includegraphics{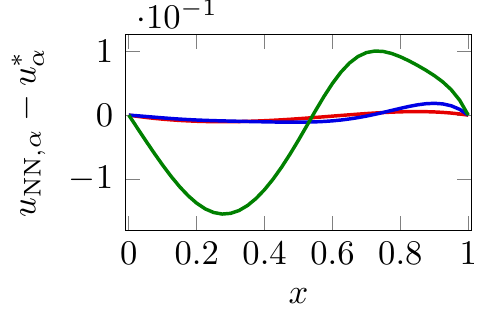}
\caption{With WANs}
\label{figure:WAN_xalpha(x-1)_25000}
\end{subfigure}\hskip 1em%
\begin{subfigure}[t]{0.31\textwidth}
\centering
\includegraphics{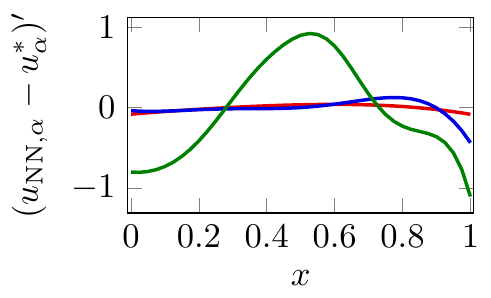}
\end{subfigure}\vskip 1em%
\begin{subfigure}[t]{0.31\textwidth}
\centering
\includegraphics{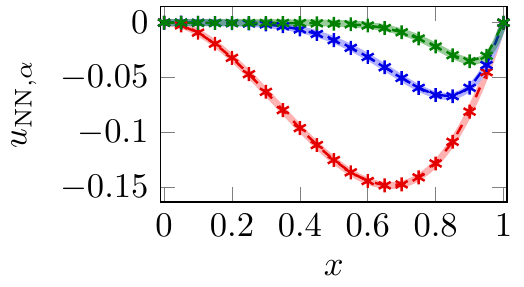}
\end{subfigure}\hskip 1em%
\begin{subfigure}[t]{0.31\textwidth}
\centering
\includegraphics{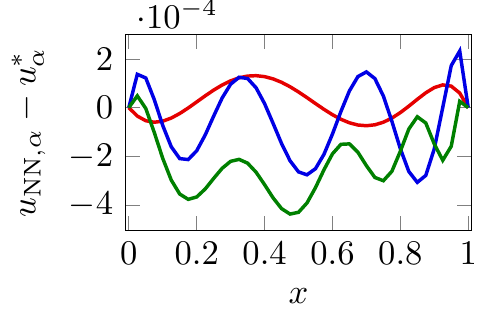}
\caption{With the DRM}
\label{figure:DRM_xalpha(x-1)_5000}
\end{subfigure}\hskip 1em%
\begin{subfigure}[t]{0.31\textwidth}
\centering
\includegraphics{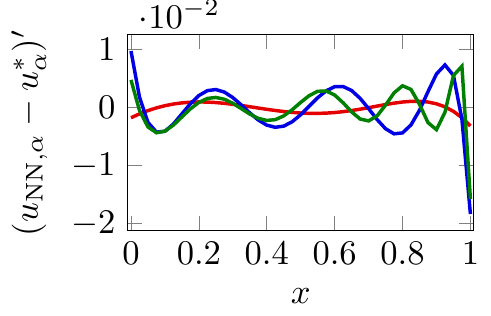}
\end{subfigure}\vskip 1em%
\begin{subfigure}[t]{0.31\textwidth}
\centering
\includegraphics{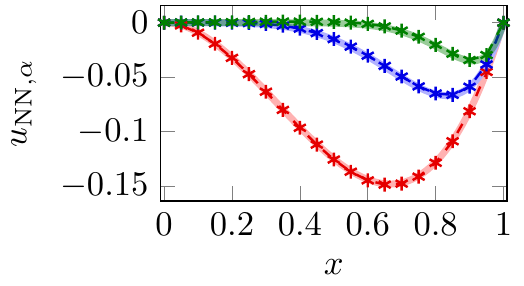}
\end{subfigure}\hskip 1em%
\begin{subfigure}[t]{0.31\textwidth}
\centering
\includegraphics{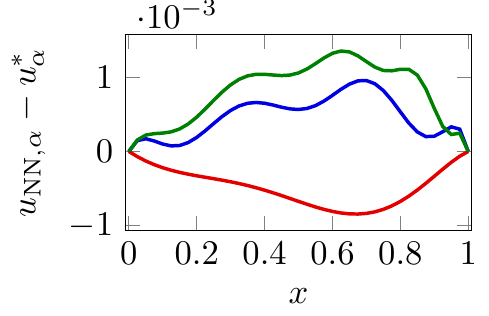}
\caption{With the D$^2$RM}
\label{figure:DDRM_xalpha(x-1)_25000}
\end{subfigure}\hskip 1em%
\begin{subfigure}[t]{0.31\textwidth}
\centering
\includegraphics{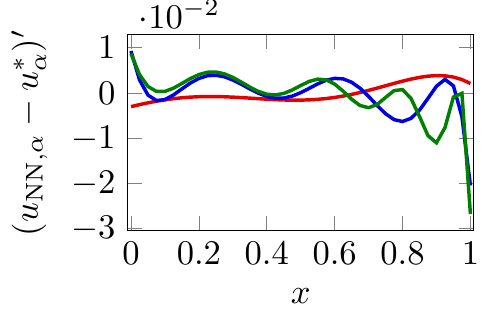}
\end{subfigure}
\caption{Trial network predictions and errors in WANs, the DRM, and the D$^2$RM in problem \eqref{1DLap} with analytic solution $u_\alpha^*=x^{\alpha}(x-1)$ for $\alpha\in\{2,5,10\}$. }
\label{figure:xalpha(x-1)_predictions}
\end{figure}

WANs show poor results in all the cases,  with a clear non-convergent tendency as the training progresses, possibly justified by the unstable behavior of the method at the continuous level. Thus, we discard the WANs for the remainder of the experiments and focus on the other two methods.

In the DRM and the D$^2$RM, we observe a decreasing behavior of the relative error during training.  We highlight the nearly identical behavior of the relative norm errors of $u_\n$ and $\tau_\n(u_\n)$ in the D$^2$RM,  which suggests that the D$^2$RM behaves as the DRM,  as desired. 

\subsubsection{With singularities: $1/2 < \alpha < 1$}
Here,  $(u^*_\alpha)'(x)\to-\infty$ as $x\to 0$, which suggests that a large portion of the integration nodes should concentrate on a neighborhood of zero to appropriately capture the explosive trend of the derivative at that point.  To this end,  we divide the batch as the union of two equal-sized different $\beta(a,b)$ samples:  one with $a=1=b$, and the other with $a=10^4$ and $b=1$. We individually experiment for $\alpha\in\{0.6, 0.7, 0.8\}$ with $100\mathord{,}000$ iterations for approximating $u_\n$, which implies $500\mathord{,}000$ for the D$^2$RM when taking into account the iterations dedicated to approximate the trial-to-test operator. Table \ref{table:experiments3} displays the record of the relative error estimates along different stages of the training,  and Figure \ref{figure:xalpha(x-1)_singular_predictions}  shows the $u_\n$ predictions and error functions at the end of the training.

\begin{table}[htbp]
\centering
\begin{tabular}{|c|c||c|c|c|c|c|}
\toprule
\multicolumn{2}{|c||}{\textbf{Training progress}} & $\mathbf{4}\bf{\%}$ & $\mathbf{20}\bf{\%}$ & $\mathbf{40}\bf{\%}$ & $\mathbf{60}\bf{\%}$ & $\mathbf{100}\bf{\%}$\\ \bottomrule
\toprule
\textbf{Method} & $\boldsymbol{\alpha}$ & \multicolumn{5}{c|}{$\frac{\Vert u_\n-u^*\Vert_\U}{\Vert u^*\Vert_\U}\times 100$} \\ \midrule
\multicolumn{1}{|l||}{\multirow{3}{*}{DRM}}  & $0.6$ & $42.23\%$ & $30.29\%$ & $26.91\%$ & $25.34\%$ & $23.84\%$\\ \cline{2-7} 
\multicolumn{1}{|l||}{}                       & $0.7$ &  $18.47\%$ &$9.49\%$ &$7.42\%$ &$6.65\%$ &$5.95\%$\\ \cline{2-7} 
\multicolumn{1}{|l||}{}                       & $0.8$ &  $8.78\%$ &$3.50\%$ &$2.46\%$ &$2.10\%$ &$1.81\%$\\ \midrule
\multicolumn{1}{|l||}{\multirow{3}{*}{D$^2$RM}} & $0.6$ &  $41.67\%$ &$30.25\%$ &$27.05\%$ &$25.76\%$ &$24.40\%$\\ \cline{2-7} 
\multicolumn{1}{|l||}{}                       & $0.7$ & $14.12\%$ &$9.99\%$ &$7.91\%$ &$6.98\%$ &$6.15\%$ \\ \cline{2-7} 
\multicolumn{1}{|l||}{}                       & $0.8$ & $6.00\%$ & $3.62\%$ & $2.78\%$ & $2.55\%$ & $2.13\%$\\ \bottomrule
\toprule
\textbf{Method} & $\boldsymbol{\alpha}$ & \multicolumn{5}{c|}{$\frac{\Vert \tau_\n(u_\n)-Tu^*\Vert_\V}{\Vert Tu^*\Vert_\V}\times 100$} \\ \midrule
\multicolumn{1}{|l||}{\multirow{3}{*}{D$^2$RM}} & $0.6$ &  $41.68\%$ &$30.25\%$ &$27.05\%$ &$25.77\%$ &$24.40\%$ \\ \cline{2-7} 
\multicolumn{1}{|l||}{}                       & $0.7$ &  $14.12\%$ &$10.00\%$ &$7.91\%$ &$6.98\%$ &$6.15\%$ \\ \cline{2-7} 
\multicolumn{1}{|l||}{}                       & $0.8$ &  $6.01\%$ &$3.62\%$ &$2.78\%$ &$2.54\%$ &$2.13\%$\\ \bottomrule
\end{tabular}%
\caption{Relative errors of $u_\n$ (in the DRM, and the D$^2$RM) and $\tau_\n(u_\n)$ (in the D$^2$RM) along different stages of the training progress in problem \eqref{1DLap} with analytic solution $u_\alpha^*=x^{\alpha}(x-1)$ and $\alpha\in\{0.6, 0.7, 0.8\}$. }
\label{table:experiments3}
\end{table}

\begin{figure}[htbp]
\centering
\begin{subfigure}[t]{\textwidth}
\centering
\includegraphics{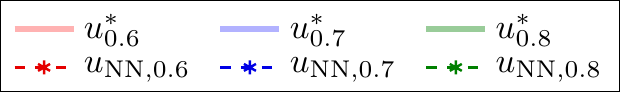}
\end{subfigure}\vskip 1em%
\begin{subfigure}[t]{0.35\textwidth}
\centering
\includegraphics{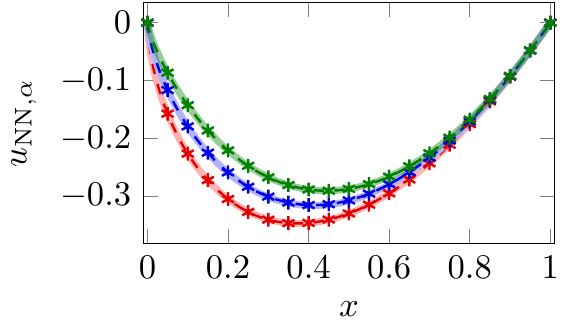}
\end{subfigure}\hskip 1em%
\begin{subfigure}[t]{0.35\textwidth}
\centering
\includegraphics{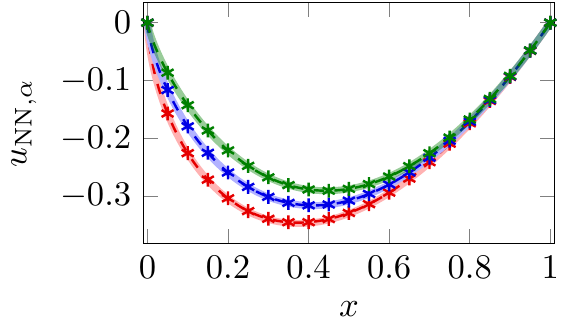}
\end{subfigure}\vskip 1em%
\begin{subfigure}[t]{0.35\textwidth}
\centering
\includegraphics{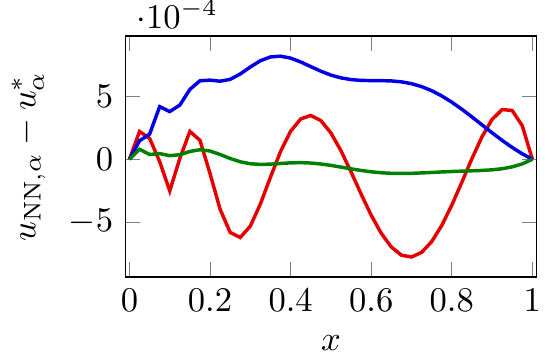}
\end{subfigure}\hskip 1em%
\begin{subfigure}[t]{0.35\textwidth}
\centering
\includegraphics{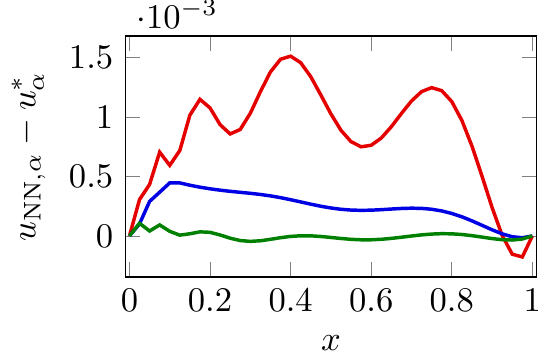}
\end{subfigure}\vskip 1 em%
\begin{subfigure}[t]{0.35\textwidth}
\centering
\includegraphics{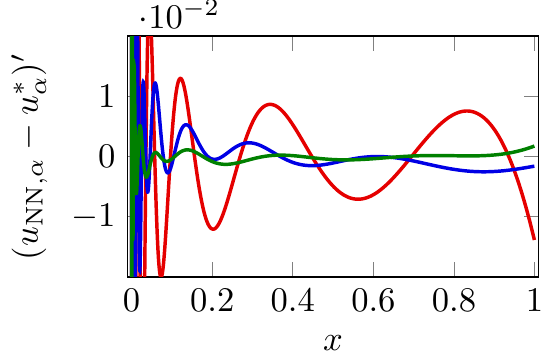}
\end{subfigure}\hskip 1em%
\begin{subfigure}[t]{0.35\textwidth}
\centering
\includegraphics{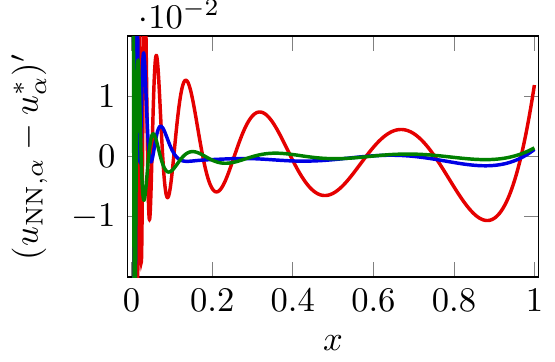}
\end{subfigure}\vskip 1 em%
\begin{subfigure}[t]{0.35\textwidth}
\centering
\includegraphics{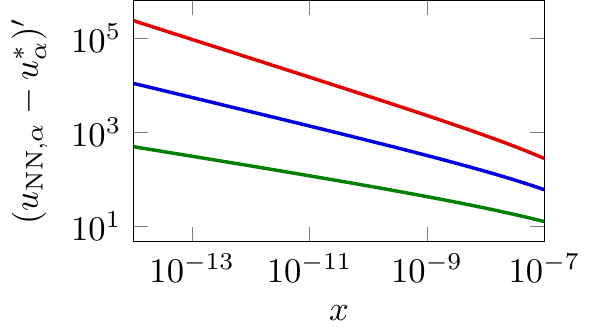}
\caption{With the DRM}
\end{subfigure}\hskip 1em%
\begin{subfigure}[t]{0.35\textwidth}
\centering
\includegraphics{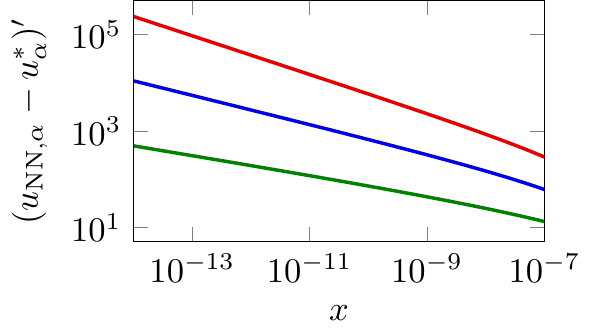}
\caption{With the D$^2$RM}
\end{subfigure}
\caption{Trial network predictions and error functions in the DRM and the D$^2$RM in problem \eqref{1DLap} with analytic solution $u_\alpha^*=x^{\alpha}(x-1)$ for $\alpha\in\{0.6,0.7,0.8\}$.  The last row is an augmented version of the third one in a neighborhood of zero.}
\label{figure:xalpha(x-1)_singular_predictions}
\end{figure}

The DRM and D$^2$RM decrease the relative errors at similar rates, slowing down notably from the $40\%$ of the training progress onwards.  For $\alpha \in\{ 0.7,  0.8\}$, we achieve final relative errors below $7.5\%$ at the end of training, with loss predictions around $-0.3632$ and $-0.2562$ at the end of the training where the optimal values are around $-0.3646$ and $-0.2564$, respectively.  For $\alpha = 0.6$, the relative error is above $20\%$,  with a final loss prediction around $-0.6429$ where the optimal value is around $-0.6818$.  We repeated several experiments in this case modifying the $a$ parameter of the beta distribution with the intention of concentrating more points in the singularity, but without success. \pagebreak

\subsection{Pure diffusion equation with Dirac delta source}\label{section:Diffusion equation with a Dirac delta source}
We now select the source $l=4 \delta_{1/2}\in H^{-1}(0,1)\setminus L^2(0,1)$, where $\delta_{1/2}$ denotes the Dirac delta at $1/2$,  i.e.,  $\delta_{1/2}(v)=v(1/2)$ for all $v\in H^1_0(0,1)$. The analytic solution is
 \begin{equation}\label{equation:corner_solution}
 u^*=\begin{cases}2x, &\text{if }x<1/2,\\
 2(1-x), &\text{if } x>1/2.
 \end{cases}
 \end{equation} 

We train $u_\n$ for $20\mathord{,}000$ iterations selecting $a=1=b$ and $a=10=b$ in the $\beta(a,b)$ probability distribution for the two-sampled batch.  Figure \ref{figure:corner_u} shows the trial network predictions at three different phases of the training progress, and Figure \ref{figure:corner_error_u} shows the error functions and their derivatives at those checkpoints.  Table \ref{table:experiments4} displays a record of the relative errors.  In this occasion,  the D$^2$RM shows higher errors than the DRM.  

\begin{figure}[htbp]
\centering
\begin{subfigure}[t]{\textwidth}
\centering
\includegraphics{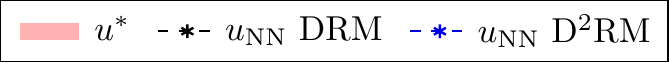}
\end{subfigure}\vskip 1em
\begin{subfigure}[t]{0.30\textwidth}
\centering
\includegraphics{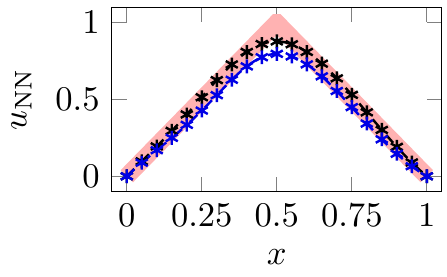}
\caption{At $4\%$ of the training progess}
\label{figure:DeepRitz_corner_prediction}
\end{subfigure}\hskip 1em
\begin{subfigure}[t]{0.30\textwidth}
\centering
\includegraphics{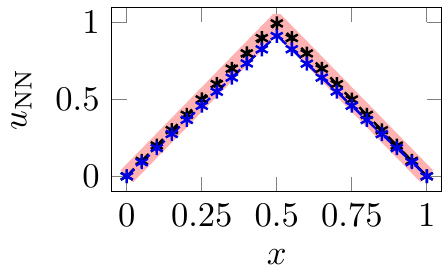}
\caption{At $40\%$ of the training progess}
\label{figure:DeepDRitz_corner_prediction}
\end{subfigure}\hskip 1em
\begin{subfigure}[t]{0.30\textwidth}
\centering
\includegraphics{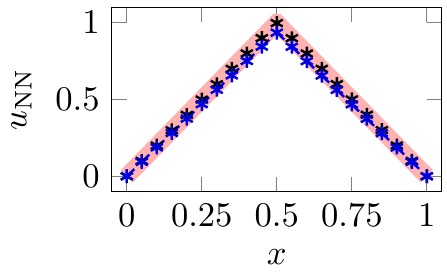}
\caption{At $100\%$ of the training progess}
\label{figure:DeepDRitz_corner_prediction}
\end{subfigure}
\caption{Trial network predictions for the DRM and the D$^2$RM in problem \eqref{1DLap} with analytic solution \eqref{equation:corner_solution}. }
\label{figure:corner_u}
\end{figure}

\begin{figure}[htbp]
\centering
\begin{subfigure}[t]{\textwidth}
\centering
\includegraphics{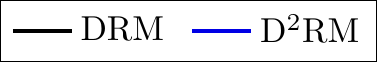}
\end{subfigure}\vskip 1em%
\begin{subfigure}[t]{0.30\textwidth}
\centering
\includegraphics{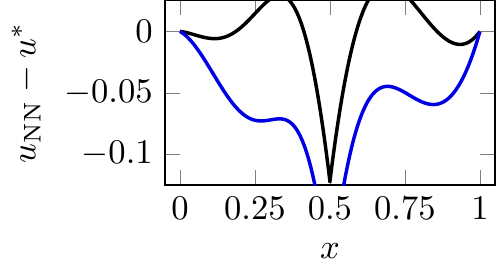}
\end{subfigure}\hskip 1em%
\begin{subfigure}[t]{0.30\textwidth}
\centering
\includegraphics{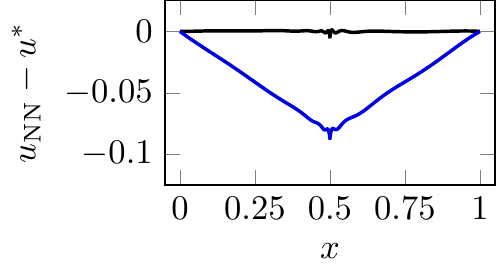}
\end{subfigure}\hskip 1em%
\begin{subfigure}[t]{0.30\textwidth}
\centering
\includegraphics{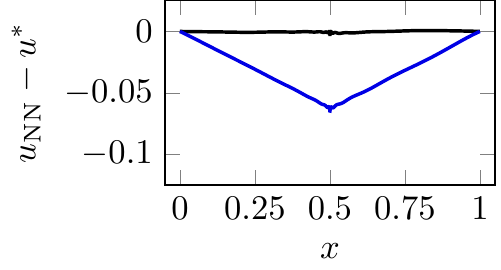}
\end{subfigure}\vskip 1em%
\begin{subfigure}[t]{0.30\textwidth}
\centering
\includegraphics{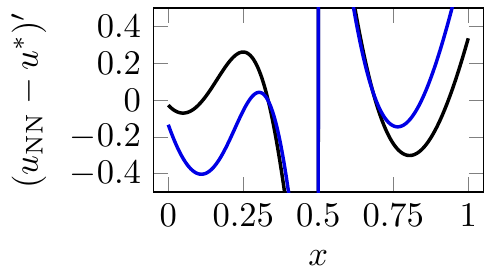}
\caption{At $4\%$ of the training progess}
\label{figure:corner_derror1}
\end{subfigure}\hskip 1em%
\begin{subfigure}[t]{0.30\textwidth}
\centering
\includegraphics{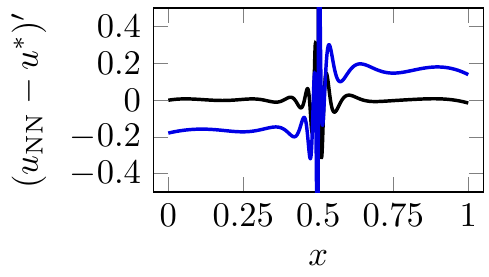}
\caption{At $40\%$ of the training progess}
\label{figure:corner_derror2}
\end{subfigure}\hskip 1em%
\begin{subfigure}[t]{0.30\textwidth}
\centering
\includegraphics{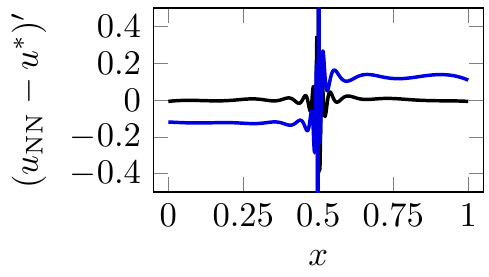}
\caption{At $100\%$ of the training progess}
\label{figure:corner_derror3}
\end{subfigure}
\caption{Trial error functions in problem \eqref{1DLap} with analytic solution \eqref{equation:corner_solution} at different stages of the training progress.}
\label{figure:corner_error_u}
\end{figure}

\begin{table}[htbp]
\centering
\begin{tabular}{|c||c|c|c|c|c|}
\toprule
\textbf{Training progress} & $\mathbf{4}\bf{\%}$ & $\mathbf{20}\bf{\%}$ & $\mathbf{40}\bf{\%}$ & $\mathbf{60}\bf{\%}$ & $\mathbf{100}\bf{\%}$\\ \bottomrule
\toprule
\textbf{Method} & \multicolumn{5}{c|}{$\frac{\Vert u_\n-u^*\Vert_\U}{\Vert u^*\Vert_\U}\times 100$}\\ \midrule
DRM & $32.32\%$ & $10.10\%$ & $6.22\%$ & $5.08\%$ & $4.34\%$\\ \midrule
D$^2$RM & $32.43\%$&$13.95\%$&$10.87\%$&$9.41\%$&$7.95\%$\\ \bottomrule
\toprule
\textbf{Method} & \multicolumn{5}{c|}{$\frac{\Vert \tau_\n(u_\n)-Tu^*\Vert_\V}{\Vert Tu^*\Vert_\V}\times 100$}\\ \midrule
D$^2$RM & $32.68\%$&$14.09\%$&$10.84\%$&$9.53\%$&$8.16\%$\\ \bottomrule
\end{tabular}%
\caption{Relative errors of $u_\n$ (in the DRM, and the D$^2$RM) and $\tau_\n(u_\n)$ (in the D$^2$RM) along different stages of the training progress in problem \eqref{1DLap} with analytic solution \eqref{equation:corner_solution}. }
\label{table:experiments4}
\end{table}

Figure \ref{figure:corner_u_loss} shows the losses evolution for the DRM and the D$^2$RM.  We observe that in the DRM and the D$^2$RM, the loss related to the trial networks attain thresholds below $-1.98$, where $-2$ is the optimum at the continuous level,  but the loss related to the optimal test networks remains much higher. 

\begin{figure}[htbp]
\centering
\begin{subfigure}[t]{\textwidth}
\centering
\includegraphics{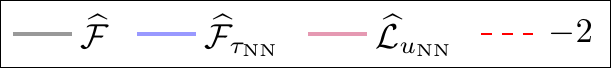}
\end{subfigure}\vskip 1em%
\begin{subfigure}[t]{0.30\textwidth}
\centering
\includegraphics{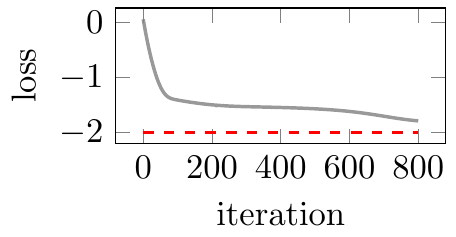}
\end{subfigure}\hskip 1em%
\begin{subfigure}[t]{0.30\textwidth}
\centering
\includegraphics{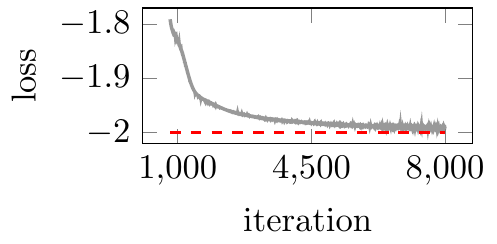}
\end{subfigure}\hskip 1em%
\begin{subfigure}[t]{0.30\textwidth}
\centering
\includegraphics{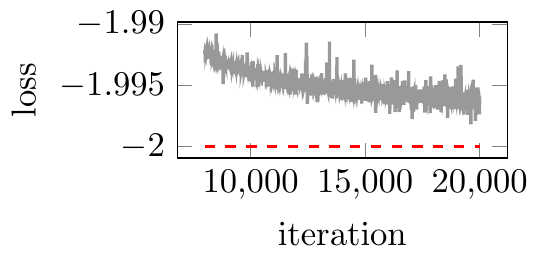}
\end{subfigure}\vskip 1em%
\begin{subfigure}[t]{0.30\textwidth}
\centering
\includegraphics{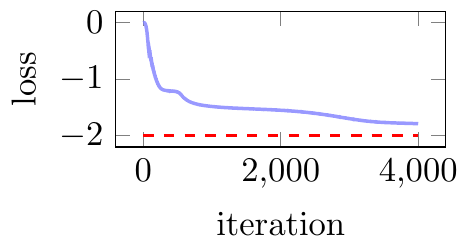}
\end{subfigure}\hskip 1em%
\begin{subfigure}[t]{0.30\textwidth}
\centering
\includegraphics{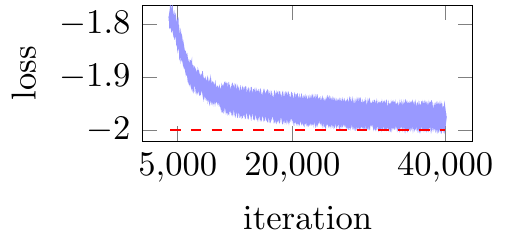}
\end{subfigure}\hskip 1em%
\begin{subfigure}[t]{0.30\textwidth}
\centering
\includegraphics{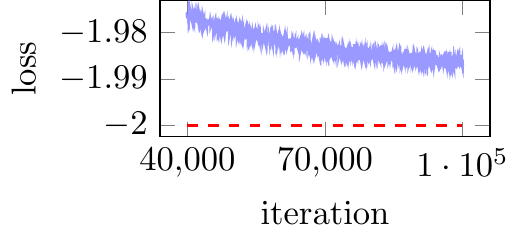}
\end{subfigure}\vskip 1em%
\begin{subfigure}[t]{0.30\textwidth}
\centering
\includegraphics{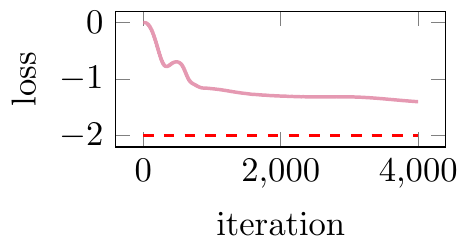}
\end{subfigure}\hskip 1em%
\begin{subfigure}[t]{0.30\textwidth}
\centering
\includegraphics{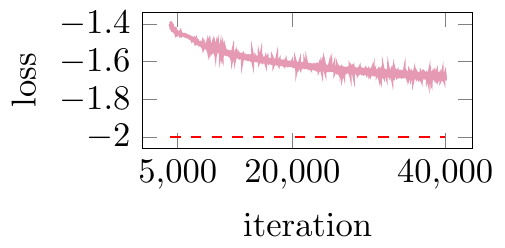}
\end{subfigure}\hskip 1em%
\begin{subfigure}[t]{0.30\textwidth}
\centering
\includegraphics{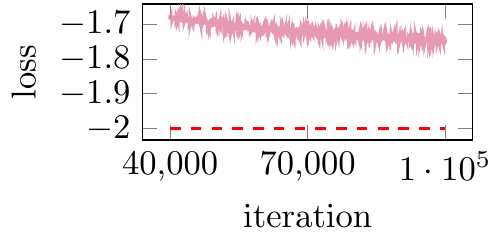}
\end{subfigure}
\caption{Loss functions evolution in the DRM and the D$^2$RM in problem \eqref{1DLap} with exact solution \eqref{equation:corner_solution}. }
\label{figure:corner_u_loss}
\end{figure}

\subsection{Pure convection equation in ultraweak form}\label{section:Advection equation in ultraweak formulation}
We consider the spatial convection equation
\begin{equation}\label{equation:advection}
\begin{cases}
u'=\delta_{1/2}, &\text{in } (0,1),\\
u(0)=0.
\end{cases}
\end{equation} Here, the (ultra)weak formulation is appropriate because $\delta_{1/2}$ does not belong to $L^2(0,1)$.  Integration by parts yields:
\begin{equation}
b(u,v):=-\int_0^1 uv', \quad l(v):=v(1/2),\qquad u\in\U, v\in\V,
\end{equation} with $\U:=L^2(0,1)$ and $\V:=H^1_{0)}:=\{v\in H^1(0,1):v(1)=0\}$.  The trial-to-test operator is no longer the identity and has the following integral form \cite{gopalakrishnan2013five, munoz2021equivalence}:
\begin{equation}\label{trial-to-test convection}
(Tu)(x)=\int_x^1 u(s) ds, \qquad u\in L^2(0,1).
\end{equation} For the exact solution and its corresponding optimal test function, we have
\begin{equation}\label{equation:advection_solution}
u^*=\begin{cases}
0, &\text{if } 0<x<1/2,\\
1, &\text{if } 1/2<x<1,
\end{cases} \in L^2(0,1),
\qquad Tu^*=\begin{cases}
1/2 &\text{if } 0<x<1/2,\\
1-x, &\text{if } 1/2<x<1,
\end{cases} \in H^1_{0)}(0,1).
\end{equation} 

In our context of NNs,  considering $T$ as an available operator is challenging,  so we discard employing the GDRM in here, and alternatively employ the (DRM$)'$ and the D$^2$RM:

\begin{itemize}
\item \textbf{Adjoint Deep Ritz Method.} Following Section \ref{Generalized Ritz method},  item (c), we minimize $\F'$ to find an approximation to the optimal test function of the trial solution, i.e., 
\begin{equation}
Tu^* \approx \arg\min_{v_\n\in\V_\n} \F'(v_\n). 
\end{equation} Later, we post-process the above minimizer by applying the available adjoint operator $A'=-d/dx$.  

We perform $50\mathord{,}000$ training iterations.  Table \ref{table:experiments5} records the evolution of the approximated relative norm errors $\frac{\Vert v_\n-Tu\Vert_\V}{\Vert Tu\Vert_\V}$ and $\frac{\Vert A' v_\n-u \Vert_\U}{\Vert u\Vert_\U}$ along the training progress. Figure \ref{figure:Adjoint DRM_break} shows the predictions and errors of both $v_\n$ and $A' v_\n$ along different stages of the training. 

\begin{table}[htbp]
\centering
\begin{tabular}{|c|c|c|c|c|c|}
\toprule
\textbf{Training progress} & $\mathbf{4}\bf{\%}$ & $\mathbf{20}\bf{\%}$ & $\mathbf{40}\bf{\%}$ &$\mathbf{60}\bf{\%}$ & $\mathbf{100}\bf{\%}$\\
\toprule
$\frac{\Vert v_\n-Tu^*\Vert_\V}{\Vert Tu^*\Vert_\V}\times 100$  & $19.03\%$ & $4.58\%$ & $3.55\%$ & $3.20\%$ & $2.83\%$ \\ \midrule
$\frac{\Vert A' v_\n-u^*\Vert_\U}{\Vert u^*\Vert_\U}\times 100$  & $21.95\%$ & $5.28\%$ & $4.10\%$ & $3.69\%$ & $3.27\%$ \\ \bottomrule
\end{tabular}%
\caption{Relative errors of $v_\n$ and $A' v_\n$ in problem \eqref{equation:advection} with exact trial and optimal test solutions \eqref{equation:advection_solution} along different stages of the training progress.}
\label{table:experiments5}
\end{table}

\begin{figure}[htbp]
\centering
\begin{subfigure}[t]{\textwidth}
\centering
\includegraphics{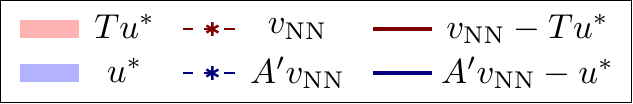}
\end{subfigure}\vskip 1em%
\begin{subfigure}[t]{0.30\textwidth}
\centering
\includegraphics{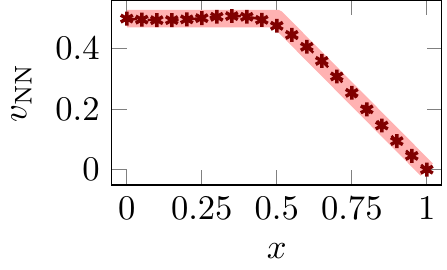}
\end{subfigure}\hskip 1em%
\begin{subfigure}[t]{0.30\textwidth}
\centering
\includegraphics{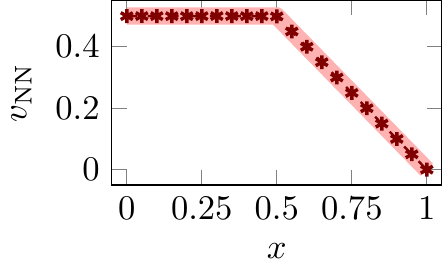}
\end{subfigure}\hskip 1em%
\begin{subfigure}[t]{0.30\textwidth}
\centering
\includegraphics{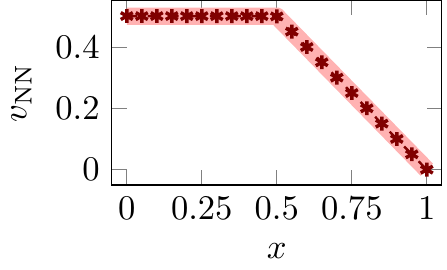}
\end{subfigure}\vskip 1em%
\begin{subfigure}[t]{0.30\textwidth}
\centering
\includegraphics{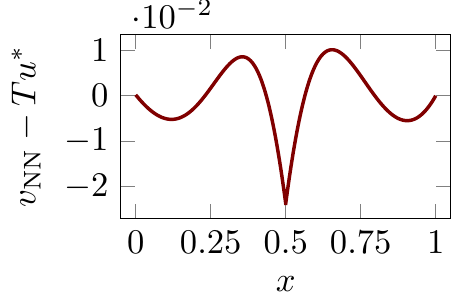}
\end{subfigure}\hskip 1em%
\begin{subfigure}[t]{0.30\textwidth}
\centering
\includegraphics{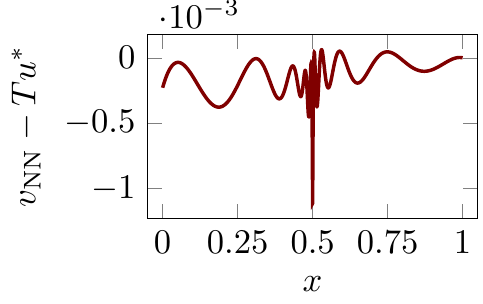}
\end{subfigure}\hskip 1em%
\begin{subfigure}[t]{0.30\textwidth}
\centering
\includegraphics{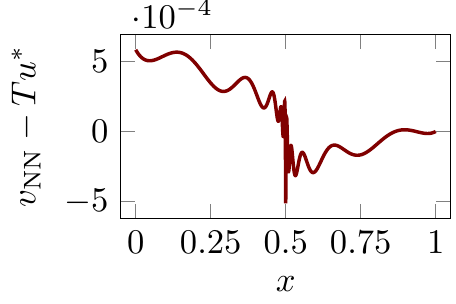}
\end{subfigure}\vskip 1em%
\begin{subfigure}[t]{0.30\textwidth}
\centering
\includegraphics{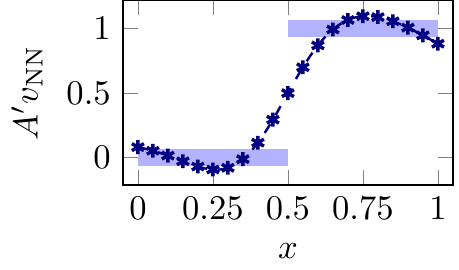}
\end{subfigure}\hskip 1em%
\begin{subfigure}[t]{0.30\textwidth}
\centering
\includegraphics{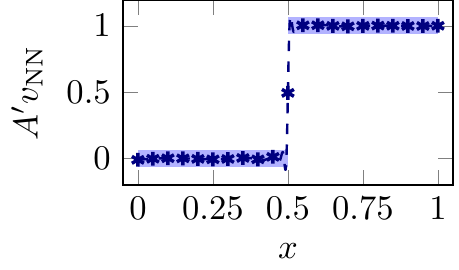}
\end{subfigure}\hskip 1em%
\begin{subfigure}[t]{0.30\textwidth}
\centering
\includegraphics{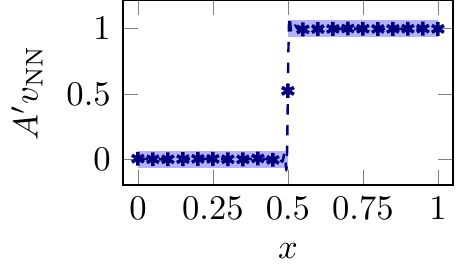}
\end{subfigure}\vskip 1em%
\begin{subfigure}[t]{0.30\textwidth}
\centering
\includegraphics{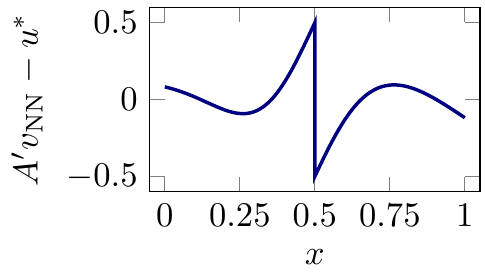}
\caption{At $4\%$ of the training progress}
\end{subfigure}\hskip 1em%
\begin{subfigure}[t]{0.30\textwidth}
\centering
\includegraphics{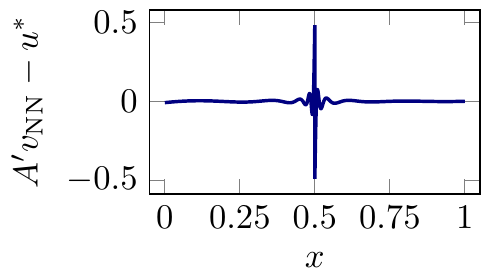}
\caption{At $40\%$ of the training progress}
\end{subfigure}\hskip 1em%
\begin{subfigure}[t]{0.30\textwidth}
\centering
\includegraphics{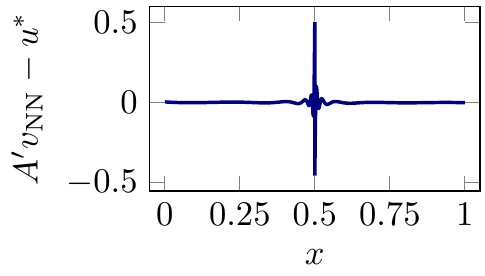}
\caption{At $100\%$ of the training progress}
\end{subfigure}
\caption{$v_\n$ network predictions,  post-processings $A' v_\n$,  and corresponding error functions for the (DRM$)'$ in problem \eqref{equation:advection} with exact trial solution \eqref{equation:advection_solution} along different stages of the training progress.}
\label{figure:Adjoint DRM_break}
\end{figure}

\item  \textbf{Deep Double Ritz Method.} We impose the outflow-boundary condition only to $\tau_\n$, letting $u_\n$ be boundary-free,  and perform $50\mathord{,}000$ iterations for $u_\n$ and increase the number of inner-loop iterations from four to nine (i.e., a total of $500\mathord{,}000$ iterations if we account those dedicated to the test maximizer).  Table \ref{table:experiments6} records the relative errors for $u_\n$ and $\tau_\n(u_\n)$ along different stages of the training. Figure \ref{figure:DDRM_break} shows the trial and optimal test network predictions and error functions at the end of the training.

\begin{table}[htbp]
\centering
\begin{tabular}{|c|c|c|c|c|c|}
\toprule
\textbf{Training progress} & $\mathbf{4}\bf{\%}$ & $\mathbf{20}\bf{\%}$ & $\mathbf{40}\bf{\%}$ &$\mathbf{60}\bf{\%}$ & $\mathbf{100}\bf{\%}$\\
\toprule
$\frac{\Vert u_\n - u^*\Vert_\U}{\Vert u^*\Vert_\U}$ & $42.05\%$ & $29.69\%$ & $18.42\%$ & $11.20\%$ & $8.93\%$ \\ \midrule
$\frac{\Vert \tau_\n(u_\n) - Tu^*\Vert_\V}{\Vert Tu^*\Vert_\V}$ & $37.27\%$ & $28.25\%$ & $19.02\%$ & $9.55\%$ & $6.13\%$ \\ \bottomrule
\end{tabular}%
\caption{Relative errors of $u_\n$ and $\tau_n u_\n$, respectively,  along different stages of the training progress in problem \eqref{equation:advection} with exact trial and optimal test solutions \eqref{equation:advection_solution}. }
\label{table:experiments6}
\end{table}

\begin{figure}[htbp]
\centering
\begin{subfigure}[t]{\textwidth}
\centering
\includegraphics{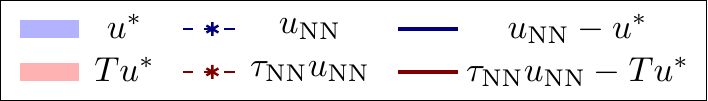}
\end{subfigure}\vskip 1em%
\begin{subfigure}[t]{0.30\textwidth}
\centering
\includegraphics{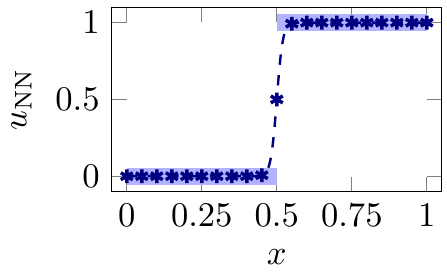}
\end{subfigure}\hskip 1em%
\begin{subfigure}[t]{0.30\textwidth}
\centering
\includegraphics{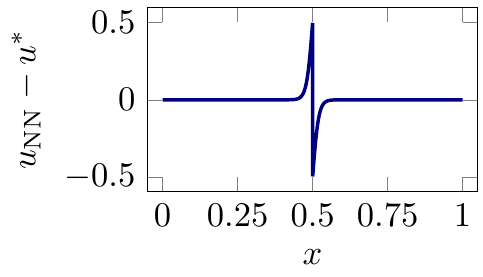}
\end{subfigure}\vskip 1em%
\begin{subfigure}[t]{0.30\textwidth}
\centering
\includegraphics{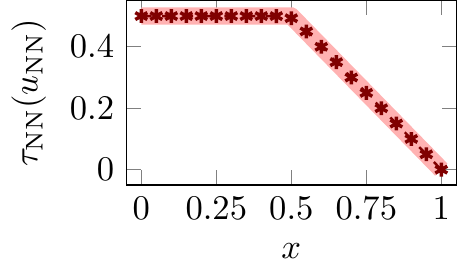}
\end{subfigure}\hskip 1em%
\begin{subfigure}[t]{0.30\textwidth}
\centering
\includegraphics{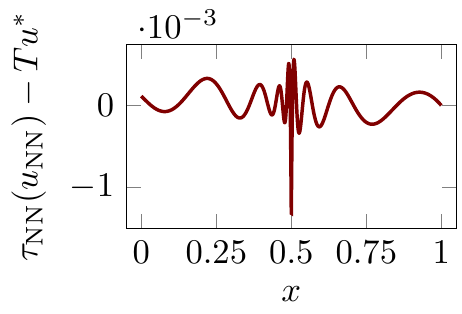}
\end{subfigure}\hskip 1em%
\begin{subfigure}[t]{0.30\textwidth}
\centering
\includegraphics{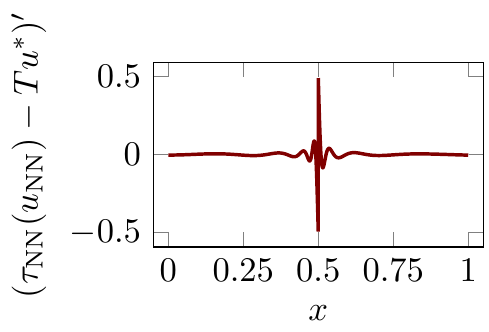}
\end{subfigure}
\caption{$u_\n$ and $\tau_\n(u_\n)$ predictions,  errors, and derivative of the errors for the D$^2$RM in problem \eqref{equation:advection} with exact trial and optimal test solutions \eqref{equation:advection_solution}.}
\label{figure:DDRM_break}
\end{figure}

\end{itemize}

\pagebreak

\review{
\subsection{2D pure convection in strong variational form}\label{2D section}

Let
\begin{equation} \label{equation:advection2D}
\begin{cases}
\displaystyle \frac{\partial u}{\partial x} + \frac{\partial u}{\partial y} =k\pi \sin\left(k\pi (x+y)\right), &\text{in } \Omega=(0,1)\times(0,1),\\
u(x,0)=u(0,y)=0,& 0\leq x,y\leq 1,
\end{cases}
\end{equation} with $k=3/2$, and consider its strong variational formulation, i.e., 
\begin{equation}
b(u,v):=\int_{\Omega} \left(\frac{\partial u}{\partial x} + \frac{\partial u}{\partial y}\right)v, \quad l(v):=k\pi \int_{\Omega}  \sin(k\pi (x+y)) v,\quad u\in\U, v\in\V,
\end{equation} with $\U:=\{u\in L^2(\Omega): \partial u/\partial x + \partial u/\partial y \in L^2(\Omega), u(x,0)=u(0,y)=0, 0\leq x,y\leq 1 \}$ and $\V:=L^2(\Omega)$. Its analytic solution is $u^*=\sin(3\pi x/2)\sin(3\pi y/2)$, and the trial-to-test operator is the PDE operator.

We perform $200\mathord{,}000$ iterations in the D$^2$RM with a training regime of nine iterations in the inner loop for each iteration in the outer loop.  Exceptionally, we run the inner loop for $2\mathord{,}000$ iterations before the first outer-loop iteration.  For integration,  we consider $50$ nodes on each axis (i.e., $250$ integration points on the entire domain due to the cartesian-product structure). We increase the NN architecture to three layers of $50$-neuron width.

Figure \ref{figure:DDRM_2D} shows the trial and optimal test predictions with corresponding error functions at the end of the training. The resulting relative errors are $3.62\%$ and $1.70\%$ for $u_\n$ and $\tau_\n(u_\n)$, respectively.

\begin{figure}[htbp]
\centering
\begin{subfigure}[t]{0.45\textwidth}
\centering
\includegraphics{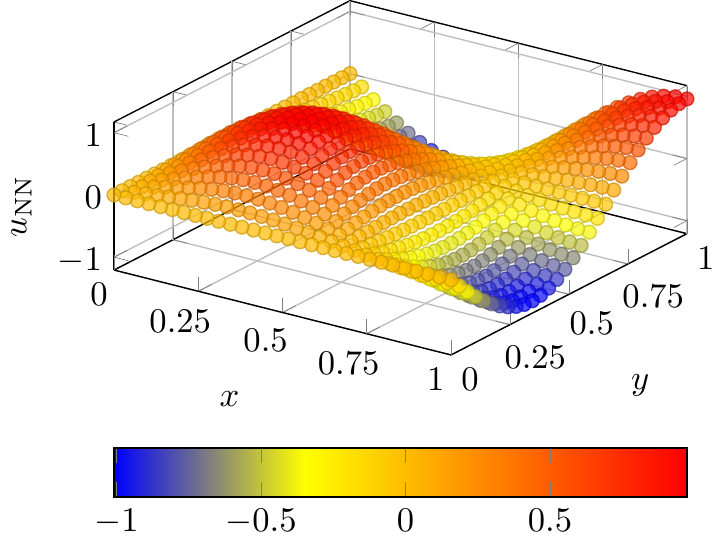}
\end{subfigure}\hskip 1em%
\begin{subfigure}[t]{0.45\textwidth}
\centering
\includegraphics{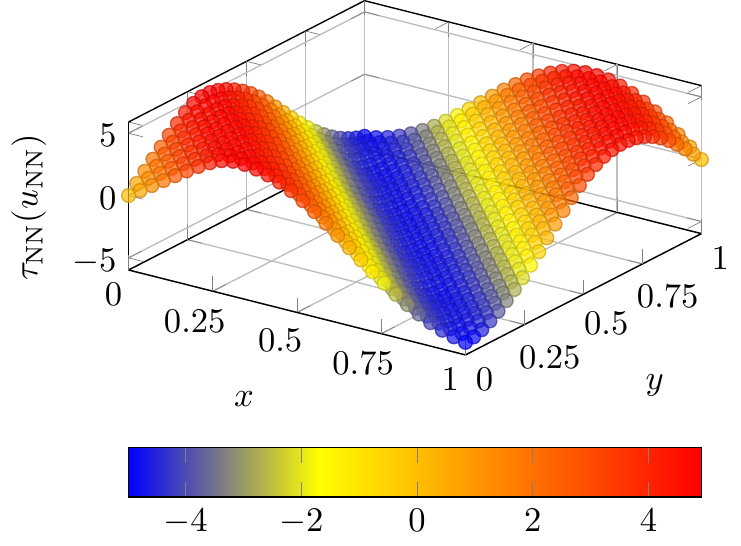}
\end{subfigure}\vskip 1em%
\begin{subfigure}[t]{0.45\textwidth}
\centering
\includegraphics{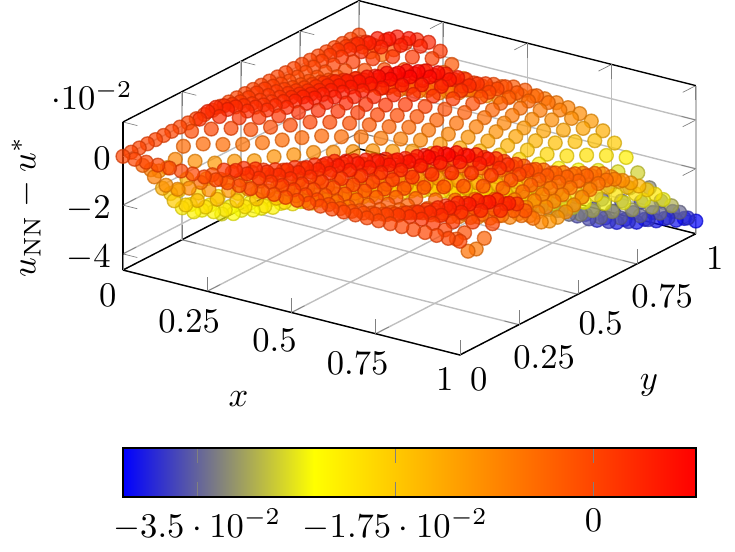}
\end{subfigure}\hskip 1em%
\begin{subfigure}[t]{0.45\textwidth}
\centering
\includegraphics{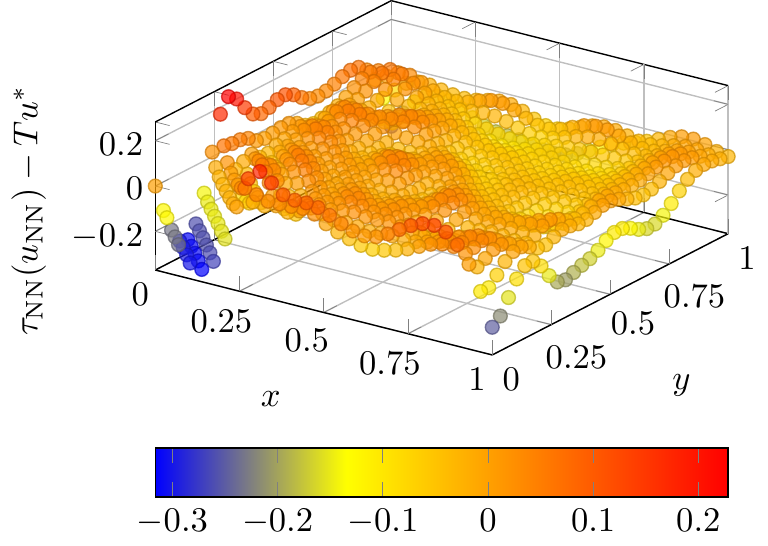}
\end{subfigure}\vskip 1em%
\begin{subfigure}[t]{0.45\textwidth}
\centering
\includegraphics{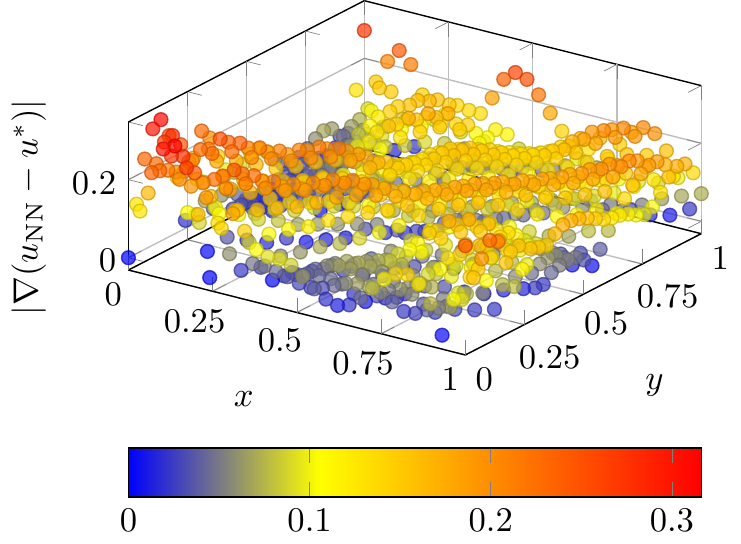}
\end{subfigure}
\caption{$u_\n$ and $\tau_\n(u_\n)$ predictions and errors for the D$^2$RM in problem \eqref{equation:advection2D} with exact trial and optimal test solutions $u^*=\sin(3\pi x/2)\sin(3\pi y/2)$ and $Tu^*=3\pi/2 \sin(3\pi x/2)\sin(3\pi y/2)$, respectively.}
\label{figure:DDRM_2D}
\end{figure}
}

\pagebreak

\section{Conclusions and future work}
\label{section:Conclusions and future work}

We studied the problem of residual minimization for solving PDEs using Neural Networks. First, we considered a min-max saddle point problem coming from the definition of the dual norm as a maximum over the test space. This method turned out numerically unstable because of the non-Lipschitz continuity of the test maximizers. To overcome this, we rewrote the general residual minimization as a minimization of the Ritz functional employing optimal test functions.  To carry out this alternative minimization while computing the optimal test functions over general problems,  we proposed a Deep Double Ritz Method (D$^2$RM) that combines two nested Ritz optimization loops. This novel method constructs local approximations of the trial-to-test operator to express the optimal test functions as dependent on the trial functions.  By doing this, NNs allowed us to easily combine the two nested Ritz problems in a way that is difficult to treat with traditional numerical methods.

We tested the D$^2$RM in several smooth and singular problems. Numerical results illustrate the advantages and limitations of the proposed methods. Among the advantages,  we encounter the good approximation capabilities of NNs, the generality of the proposed method, and its implementation to different linear PDEs, variational formulations, and spatial dimensions. As main limitations, we face two common difficulties shared by most NN-based PDE solvers, namely: (a) lack of efficient integration methods (especially in presence of singular solutions where Monte Carlo methods fail), and (b) lack of efficient optimizers,  which often fall in a local minima whose distance to global minima is uncertain. Other than that, numerical results look promising and are supported on a solid mathematical framework at the continuous level.

As future work,  \review{we will better analyze numerical integration techniques in the context of NNs; specifically, focused on Ritz-type minimizations.  In addition, we will investigate the behavior of optimal test functions to propose enhanced stopping criteria in the inner loop.  In particular, we believe that a global approximation of the trial-to-test operator with NNs would be helpful for this purpose.}

\section*{Acknowledgments}

This work has received funding from: the European Union's Horizon 2020 research and innovation program under the Marie Sklodowska-Curie grant agreement No 777778 (MATHROCKS); the Marie Sklodowska-Curie individual fellowship No 101017984 (GEODPG); the Spanish Ministry of Science and Innovation projects with references TED2021-132783B-I00, PID2019-108111RB-I00 (FEDER/AEI) and PDC2021-121093-I00 (AEI/Next Generation EU), the “BCAM Severo Ochoa” accreditation of excellence CEX2021-001142-S/MICIN/AEI/10.13039/501100011033; and the Basque Government through the BERC 2022-2025 program, the three Elkartek projects 3KIA (KK-2020/00049), EXPERTIA (KK-2021/00048), and SIGZE (KK-2021/00095), and the Consolidated Research Group MATHMODE (IT1456-22) given by the Department of Education.

\appendix
\section{Numerical \review{instability} of the min-max approach}\label{appendix}

Let $J:\U\setminus\{u^*\}\longrightarrow\V$ be the mapping that for each trial function returns the test maximizer of the actions of the residual $Bu-l\in\V'$ over the unitary sphere, i.e.,
\begin{equation}
J(u) := \arg\max_{\Vert v\Vert_\V = 1} (Bu-l)(v).
\end{equation} Notice that $J$ is not well defined at $u^*$ because $Bu^*-l$ is the null operator.

\begin{proposition}
$J(u)\in\V$ is unique and
\begin{equation} \label{appendix0}
J(u) = \frac{T(u-u^*)}{\Vert T(u-u^*)\Vert_\V}, \qquad u\in\U\setminus\{u^*\}.
\end{equation} 
\end{proposition}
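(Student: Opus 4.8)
The plan is to reduce the constrained maximization to an equality case of Cauchy--Schwarz, after passing through the Riesz representative of the residual. First I would invoke the Riesz Representation Theorem to write the residual as $(Bu-l)(v)=(w,v)_\V$ for all $v\in\V$, where $w:=R^{-1}_\V(Bu-l)\in\V$ is its unique Riesz representative. By \eqref{error_residual_representation} this representative is exactly $w=T(u-u^*)$: since $Bu^*=l$ we have $Bu-l=B(u-u^*)$, and $T=R^{-1}_\V B$ gives $T(u-u^*)=R^{-1}_\V(Bu-l)$. Thus the functional being maximized is $v\longmapsto(w,v)_\V$ with $w=T(u-u^*)$ fixed.

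Next I would verify that $w\neq 0$ whenever $u\neq u^*$, so that the normalization in \eqref{appendix0} is legitimate and $J$ is genuinely well defined on $\U\setminus\{u^*\}$. This is where the bounded-below hypothesis enters: since $R^{-1}_\V$ is an isometric isomorphism onto $\V$,
\begin{equation}
\Vert w\Vert_\V = \Vert Bu-l\Vert_{\V'} = \Vert B(u-u^*)\Vert_{\V'} \geq \gamma\,\Vert u-u^*\Vert_\U > 0 .
\end{equation}
In particular $T$ is injective, the residual never vanishes off $u^*$, and $w/\Vert w\Vert_\V$ is a well-defined unit vector.

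With $w\neq 0$, the maximization reads $\max_{\Vert v\Vert_\V=1}(w,v)_\V$. By the Cauchy--Schwarz inequality, $(w,v)_\V\leq\Vert w\Vert_\V\Vert v\Vert_\V=\Vert w\Vert_\V$ on the unit sphere, and the equality case characterizes the maximizers: equality holds if and only if $v$ is a nonnegative scalar multiple of $w$, and the only such vector of unit norm is $v=w/\Vert w\Vert_\V$. Substituting $w=T(u-u^*)$ then yields simultaneously the uniqueness of $J(u)$ and the claimed identity \eqref{appendix0}.

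The argument is essentially routine once the Riesz identification is in place; the only steps demanding care are the \emph{equality} case of Cauchy--Schwarz, which is what upgrades ``a maximizer'' to ``the unique maximizer'', and the verification that $w\neq 0$, without which the equality case would not deliver an admissible unit vector. I do not anticipate a substantive obstacle beyond book-keeping the isometry of $R^{-1}_\V$ and the substitution $Bu-l=B(u-u^*)$.
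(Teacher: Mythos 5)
Your proof is correct and follows essentially the same route as the paper's: Riesz representation of the residual, the equality case of Cauchy--Schwarz on the unit sphere, and the identification $R^{-1}_\V(Bu-l)=T(u-u^*)$ via \eqref{error_residual_representation}. The only difference is that you explicitly verify $\Vert T(u-u^*)\Vert_\V\geq\gamma\Vert u-u^*\Vert_\U>0$ using the bounded-below assumption, a point the paper leaves implicit; this is a welcome addition rather than a deviation.
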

\begin{proof} By the Riesz Representation Theorem,  for $Bu-l\in\V'$, there exists a unique $r_u\in\V$ such that
\begin{equation}\label{appendix1}
(r_u,v)_\V=(Bu-l)(v),\;\forall v\in\V\qquad \text{and} \qquad \Vert r_u\Vert_\V = \Vert Bu-l\Vert_{\V'} = \max_{\Vert v\Vert_\V = 1} (Bu-l)(v).
\end{equation} Let $w\in\V$ such that $\Vert w\Vert_{\V}=1$ and 
\begin{equation}
(Bu-l)(w)=\max_{\Vert v\Vert_{\V} = 1} (Bu-l)(v).
\end{equation} By the Cauchy-Schwarz inequality:
\begin{align}
0\leq \Vert Bu-l\Vert_{\V'} = (Bu-l)(w)=(r_u,w)_\V\leq \Vert r_u\Vert_{\V} \Vert w\Vert_{\V} = \Vert r_u\Vert_{\V},
\end{align} where the equality holds if and only if $w = \lambda r_u$ for some $\lambda\geq 0$.  Then,  $w=\frac{r_u}{\Vert r_u\Vert_\V}$ is the unique solution.  Employing equation \eqref{error_residual_representation}, we find \eqref{appendix0}. 
\end{proof}

\begin{proposition}
$J$ is not Lipschitz continuous around $u^*$, i.e.  there does not exist a constant $C>0$ such that 
\begin{equation}\label{appendix continuity}
\Vert J(u_1)-J(u_2)\Vert_\V \leq C\Vert u_1-u_2\Vert_\U, \qquad\forall u_1,u_2\in\U\setminus\{u^*\}.
\end{equation}
\end{proposition}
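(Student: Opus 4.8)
The plan is to exploit the explicit formula $J(u) = \frac{T(u-u^*)}{\Vert T(u-u^*)\Vert_\V}$ established in the previous proposition, whose crucial feature is that it is invariant under positive rescalings of $u-u^*$: for any $\lambda>0$, replacing $u-u^*$ by $\lambda(u-u^*)$ leaves $J$ unchanged, since $T$ is linear and the normalization cancels the factor $\lambda$. This degree-zero homogeneity means that $J$ cannot detect the magnitude of the perturbation away from $u^*$, only its direction, so two points collapsing onto $u^*$ from opposite directions retain a fixed separation in the image. This is exactly the mechanism I would turn into a Lipschitz violation.

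Concretely, I would fix any nonzero $w\in\U$ (possible since $\U$ is nontrivial) and set $u_n^{\pm}:=u^*\pm\tfrac{1}{n} w$ for $n\in\N$, noting that $u_n^{\pm}\in\U\setminus\{u^*\}$. Since $B$ is bounded below, $T=R_\V^{-1}B$ is injective, so $Tw\neq 0$ and hence $\Vert T(\pm\tfrac{1}{n} w)\Vert_\V=\tfrac{1}{n}\Vert Tw\Vert_\V\neq 0$, making the formula applicable at each $u_n^{\pm}$. Using linearity of $T$ and the scale invariance above, I get $J(u_n^{+})=\frac{Tw}{\Vert Tw\Vert_\V}$ and $J(u_n^{-})=-\frac{Tw}{\Vert Tw\Vert_\V}$, both independent of $n$, whence $\Vert J(u_n^{+})-J(u_n^{-})\Vert_\V=2$ for every $n$.

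On the other hand, $\Vert u_n^{+}-u_n^{-}\Vert_\U=\tfrac{2}{n}\Vert w\Vert_\U\to 0$ as $n\to\infty$. If \eqref{appendix continuity} held with some constant $C>0$, evaluating it along $u_1=u_n^{+}$ and $u_2=u_n^{-}$ would force $2\le C\cdot\tfrac{2}{n}\Vert w\Vert_\U$, i.e.\ $n\le C\Vert w\Vert_\U$ for all $n\in\N$, which is impossible. This contradiction establishes the claim.

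There is essentially no computational obstacle once the previous proposition supplies the closed form of $J$; the argument reduces to a short scaling-and-contradiction step. The only points requiring care are that $T$ be injective (so that $Tw\neq 0$), which follows from $B$ being bounded below, and that $\U$ contain a nonzero element, which is automatic. The conceptual heart of the statement — and what I would emphasize as the reason the min-max scheme is numerically unstable near the solution — is precisely this degree-zero homogeneity of $J$ about $u^*$.
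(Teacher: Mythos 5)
Your proof is correct and follows essentially the same route as the paper: both arguments exploit the antipodal symmetry $J(u^*+h)=-J(u^*-h)$ coming from the degree-zero homogeneity of the formula $J(u)=T(u-u^*)/\Vert T(u-u^*)\Vert_\V$, pairing two points reflected through $u^*$ so that the image separation stays equal to $2$ while the domain separation shrinks to zero (the paper writes this as $u_2=2u^*-u_1$ with $u_1\to u^*$, you as $u_n^{\pm}=u^*\pm\tfrac{1}{n}w$). Your explicit remark that $Tw\neq 0$ follows from $B$ being bounded below is a small point of care the paper leaves implicit, but the argument is the same.
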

\begin{proof}
Assume by contradiction that there exists $C>0$ such that \eqref{appendix continuity} holds, and select $u_2=2u^*-u_1$ with $u_1\neq u^*$. Then,
\begin{equation}
2 = \left\Vert J(u_1) - J(u_2)\right\Vert_{\V} \leq C \Vert u_1-u_2\Vert_\U = 2 C \Vert u_1-u^*\Vert_\U.
\end{equation} Letting  $u_1\to u^*$, we obtain $C\to\infty$.
\end{proof}

\bibliography{references}

\begin{thebibliography}{10}
\expandafter\ifx\csname url\endcsname\relax
  \def\url#1{\texttt{#1}}\fi
\expandafter\ifx\csname urlprefix\endcsname\relax\def\urlprefix{URL }\fi
\expandafter\ifx\csname href\endcsname\relax
  \def\href#1#2{#2} \def\path#1{#1}\fi

\bibitem{zhang2020meshingnet}
Z.~Zhang, Y.~Wang, P.~K. Jimack, H.~Wang, {MeshingNet: A new mesh generation
  method based on deep learning}, in: International Conference on Computational
  Science, Springer, 2020, pp. 186--198.

\bibitem{pfaff2020learning}
T.~Pfaff, M.~Fortunato, A.~Sanchez-Gonzalez, P.~W. Battaglia, {Learning
  mesh-based simulation with graph networks}, arXiv preprint arXiv:2010.03409.

\bibitem{paszynski2021deep}
M.~Paszy{\'n}ski, R.~Grzeszczuk, D.~Pardo, L.~Demkowicz, {Deep learning driven
  self-adaptive hp finite element method}, in: International Conference on
  Computational Science, Springer, 2021, pp. 114--121.

\bibitem{sluzalec2022quasi}
T.~Sluzalec, R.~Grzeszczuk, S.~Rojas, W.~Dzwinel, M.~Paszynski, {Quasi-optimal
  $hp$-finite element refinements towards singularities via deep neural network
  prediction}, arXiv preprint arXiv:2209.05844.

\bibitem{uriarte2022finite}
C.~Uriarte, D.~Pardo, {\'A}.~J. Omella, {A Finite Element based Deep Learning
  solver for parametric PDEs}, Computer Methods in Applied Mechanics and
  Engineering 391 (2022) 114562.

\bibitem{omella2022r}
{\'A}.~J. Omella, D.~Pardo, {$r$-Adaptive Deep Learning Method for Solving
  Partial Differential Equations}, arXiv preprint arXiv:2210.10900.

\bibitem{brevis2021machine}
I.~Brevis, I.~Muga, K.~G. van~der Zee, {A machine-learning minimal-residual
  (ML-MRes) framework for goal-oriented finite element discretizations},
  Computers \& Mathematics with Applications 95 (2021) 186--199.

\bibitem{brevis2022neural}
I.~Brevis, I.~Muga, K.~G. van~der Zee, {Neural control of discrete weak
  formulations: Galerkin, least squares \& minimal-residual methods with
  quasi-optimal weights}, Computer Methods in Applied Mechanics and Engineering
  (2022) 115716.

\bibitem{raissi2019physics}
M.~Raissi, P.~Perdikaris, G.~E. Karniadakis, {Physics-informed neural networks:
  A deep learning framework for solving forward and inverse problems involving
  nonlinear partial differential equations}, Journal of Computational physics
  378 (2019) 686--707.

\bibitem{qin2022rar}
S.-M. Qin, M.~Li, T.~Xu, S.-Q. Dong, {RAR-PINN algorithm for the data-driven
  vector-soliton solutions and parameter discovery of coupled nonlinear
  equations}, arXiv preprint arXiv:2205.10230.

\bibitem{yu2018deep}
E.~Weinan, B.~Yu, {The deep Ritz method: a deep learning-based numerical
  algorithm for solving variational problems}, Communications in Mathematics
  and Statistics 6~(1) (2018) 1--12.

\bibitem{taylor2022deep}
J.~M. Taylor, D.~Pardo, I.~Muga, {A Deep Fourier Residual Method for solving
  PDEs using Neural Networks}, arXiv preprint arXiv:2210.14129.

\bibitem{sirignano2018dgm}
J.~Sirignano, K.~Spiliopoulos, {DGM: A deep learning algorithm for solving
  partial differential equations}, Journal of computational physics 375 (2018)
  1339--1364.

\bibitem{kharazmi2019variational}
E.~Kharazmi, Z.~Zhang, G.~E. Karniadakis, {Variational physics-informed neural
  networks for solving partial differential equations}, arXiv preprint
  arXiv:1912.00873.

\bibitem{khodayi2020varnet}
R.~Khodayi-Mehr, M.~Zavlanos, {VarNet: Variational neural networks for the
  solution of partial differential equations}, in: Learning for Dynamics and
  Control, PMLR, 2020, pp. 298--307.

\bibitem{kharazmi2021hp}
E.~Kharazmi, Z.~Zhang, G.~E. Karniadakis, {hp-VPINNs: Variational
  physics-informed neural networks with domain decomposition}, Computer Methods
  in Applied Mechanics and Engineering 374 (2021) 113547.

\bibitem{shang2022deep}
Y.~Shang, F.~Wang, J.~Sun, {Deep Petrov-Galerkin method for solving partial
  differential equations}, arXiv preprint arXiv:2201.12995.

\bibitem{shin2020convergence}
Y.~Shin, J.~Darbon, G.~E. Karniadakis, {On the convergence of physics informed
  neural networks for linear second-order elliptic and parabolic type PDEs},
  arXiv preprint arXiv:2004.01806.

\bibitem{krishnapriyan2021characterizing}
A.~Krishnapriyan, A.~Gholami, S.~Zhe, R.~Kirby, M.~W. Mahoney, {Characterizing
  possible failure modes in physics-informed neural networks}, Advances in
  Neural Information Processing Systems 34 (2021) 26548--26560.

\bibitem{wang2022and}
S.~Wang, X.~Yu, P.~Perdikaris, {When and why PINNs fail to train: A neural
  tangent kernel perspective}, Journal of Computational Physics 449 (2022)
  110768.

\bibitem{daw2022rethinking}
A.~Daw, J.~Bu, S.~Wang, P.~Perdikaris, A.~Karpatne, {Rethinking the importance
  of sampling in physics-informed neural networks}, arXiv preprint
  arXiv:2207.02338.

\bibitem{bramble1997least}
J.~Bramble, R.~Lazarov, J.~Pasciak, {A least-squares approach based on a
  discrete minus one inner product for first order systems}, Mathematics of
  Computation 66~(219) (1997) 935--955.

\bibitem{bochev2009least}
P.~B. Bochev, M.~D. Gunzburger, {Least-squares finite element methods}, Vol.
  166, Springer Science \& Business Media, 2009.

\bibitem{calo2021isogeometric}
V.~M. Calo, M.~Los, Q.~Deng, I.~Muga, M.~Paszynski, {Isogeometric residual
  minimization method (iGRM) with direction splitting preconditioner for
  stationary advection-dominated diffusion problems}, Computer Methods in
  Applied Mechanics and Engineering 373 (2021) 113214.

\bibitem{calo2020adaptive}
V.~M. Calo, A.~Ern, I.~Muga, S.~Rojas, {An adaptive stabilized conforming
  finite element method via residual minimization on dual discontinuous
  Galerkin norms}, Computer Methods in Applied Mechanics and Engineering 363
  (2020) 112891.

\bibitem{cier2021automatically}
R.~J. Cier, S.~Rojas, V.~M. Calo, {Automatically adaptive, stabilized finite
  element method via residual minimization for heterogeneous, anisotropic
  advection--diffusion--reaction problems}, Computer Methods in Applied
  Mechanics and Engineering 385 (2021) 114027.

\bibitem{zang2020weak}
Y.~Zang, G.~Bao, X.~Ye, H.~Zhou, {Weak adversarial networks for
  high-dimensional partial differential equations}, Journal of Computational
  Physics 411 (2020) 109409.

\bibitem{bao2020numerical}
G.~Bao, X.~Ye, Y.~Zang, H.~Zhou, {Numerical solution of inverse problems by
  weak adversarial networks}, Inverse Problems 36~(11) (2020) 115003.

\bibitem{goodfellow2020generative}
I.~Goodfellow, J.~Pouget-Abadie, M.~Mirza, B.~Xu, D.~Warde-Farley, S.~Ozair,
  A.~Courville, Y.~Bengio, {Generative adversarial networks}, Communications of
  the ACM 63~(11) (2020) 139--144.

\bibitem{demkowicz2011class}
L.~Demkowicz, J.~Gopalakrishnan, {A class of discontinuous Petrov--Galerkin
  methods. II. Optimal test functions}, Numerical Methods for Partial
  Differential Equations 27~(1) (2011) 70--105.

\bibitem{demkowicz2014overview}
L.~F. Demkowicz, J.~Gopalakrishnan, {An overview of the discontinuous Petrov
  Galerkin method}, Recent developments in discontinuous Galerkin finite
  element methods for partial differential equations (2014) 149--180.

\bibitem{cohen2012adaptivity}
A.~Cohen, W.~Dahmen, G.~Welper, {Adaptivity and variational stabilization for
  convection-diffusion equations}, ESAIM: Mathematical Modelling and Numerical
  Analysis 46~(5) (2012) 1247--1273.

\bibitem{broersen2018stability}
D.~Broersen, W.~Dahmen, R.~Stevenson, {On the stability of DPG formulations of
  transport equations}, Mathematics of Computation 87~(311) (2018) 1051--1082.

\bibitem{demkowicz2020dpg}
L.~Demkowicz, J.~Gopalakrishnan, B.~Keith, {The DPG-star method}, Computers \&
  Mathematics with Applications 79~(11) (2020) 3092--3116.

\bibitem{petersen2021topological}
P.~Petersen, M.~Raslan, F.~Voigtlaender, {Topological properties of the set of
  functions generated by neural networks of fixed size}, Foundations of
  computational mathematics 21~(2) (2021) 375--444.

\bibitem{lei2020geometric}
N.~Lei, D.~An, Y.~Guo, K.~Su, S.~Liu, Z.~Luo, S.-T. Yau, X.~Gu, {A geometric
  understanding of deep learning}, Engineering 6~(3) (2020) 361--374.

\bibitem{brunken2019parametrized}
J.~Brunken, K.~Smetana, K.~Urban, {(Parametrized) First Order Transport
  Equations: Realization of Optimally Stable Petrov--Galerkin Methods}, SIAM
  journal on scientific computing 41~(1) (2019) A592--A621.

\bibitem{hornik1989multilayer}
K.~Hornik, M.~Stinchcombe, H.~White, {Multilayer feedforward networks are
  universal approximators}, Neural networks 2~(5) (1989) 359--366.

\bibitem{csaji2001approximation}
B.~C. Cs{\'a}ji, et~al., {Approximation with artificial neural networks},
  Faculty of Sciences, Etvs Lornd University, Hungary 24~(48) (2001) 7.

\bibitem{kratsios2020non}
A.~Kratsios, I.~Bilokopytov, {Non-euclidean universal approximation}, Advances
  in Neural Information Processing Systems 33 (2020) 10635--10646.

\bibitem{baydin2018automatic}
A.~G. Baydin, B.~A. Pearlmutter, A.~A. Radul, J.~M. Siskind, {Automatic
  differentiation in machine learning: a survey}, Journal of Marchine Learning
  Research 18 (2018) 1--43.

\bibitem{margossian2019review}
C.~C. Margossian, {A review of automatic differentiation and its efficient
  implementation}, Wiley interdisciplinary reviews: data mining and knowledge
  discovery 9~(4) (2019) e1305.

\bibitem{davis2007methods}
P.~J. Davis, P.~Rabinowitz, {Methods of numerical integration}, Courier
  Corporation, 2007.

\bibitem{leobacher2014introduction}
G.~Leobacher, F.~Pillichshammer, {Introduction to quasi-Monte Carlo integration
  and applications}, Springer, 2014.

\bibitem{rivera2022quadrature}
J.~A. Rivera, J.~M. Taylor, {\'A}.~J. Omella, D.~Pardo, {On quadrature rules
  for solving Partial Differential Equations using Neural Networks}, Computer
  Methods in Applied Mechanics and Engineering 393 (2022) 114710.

\bibitem{gupta2004handbook}
A.~K. Gupta, S.~Nadarajah, {Handbook of beta distribution and its
  applications}, CRC press, 2004.

\bibitem{weinzierl2000introduction}
S.~Weinzierl, {Introduction to monte carlo methods}, arXiv preprint
  hep-ph/0006269.

\bibitem{ruder2016overview}
S.~Ruder, {An overview of gradient descent optimization algorithms}, arXiv
  preprint arXiv:1609.04747.

\bibitem{kingma2014adam}
D.~P. Kingma, J.~Ba, {Adam: A method for stochastic optimization}, arXiv
  preprint arXiv:1412.6980.

\bibitem{abadi2016tensorflow}
M.~Abadi, A.~Agarwal, P.~Barham, E.~Brevdo, Z.~Chen, C.~Citro, G.~S. Corrado,
  A.~Davis, J.~Dean, M.~Devin, et~al., {Tensorflow: Large-scale machine
  learning on heterogeneous distributed systems}, arXiv preprint
  arXiv:1603.04467.

\bibitem{joseph2021keras}
F.~J.~J. Joseph, S.~Nonsiri, A.~Monsakul, {Keras and TensorFlow: A hands-on
  experience}, in: Advanced deep learning for engineers and scientists,
  Springer, 2021, pp. 85--111.

\bibitem{meng2017training}
C.~Meng, M.~Sun, J.~Yang, M.~Qiu, Y.~Gu, {Training deeper models by GPU memory
  optimization on TensorFlow}, in: Proc. of ML Systems Workshop in NIPS,
  Vol.~7, 2017.

\bibitem{sergeev2018horovod}
A.~Sergeev, M.~Del~Balso, {Horovod: fast and easy distributed deep learning in
  TensorFlow}, arXiv preprint arXiv:1802.05799.

\bibitem{gopalakrishnan2013five}
J.~Gopalakrishnan, {Five lectures on DPG methods}, arXiv preprint
  arXiv:1306.0557.

\bibitem{munoz2021equivalence}
J.~Mu{\~n}oz-Matute, D.~Pardo, L.~Demkowicz, {Equivalence between the DPG
  method and the exponential integrators for linear parabolic problems},
  Journal of Computational Physics 429 (2021) 110016.

\end{thebibliography}

\end{document}